\definecolor{webgreen}{rgb}{0,.5,0}
\definecolor{webbrown}{rgb}{.8,0,0}
\definecolor{emphcolor}{rgb}{0.95,0.95,0.95}
\ifpdf \hypersetup{pdftex,
            pdfstartview=FitH, 
            bookmarksopen=true,
            bookmarksnumbered=true
} \else \hypersetup{dvips} \fi
\numberwithin{equation}{section}
\newtheorem{proposition}{Proposition}[section]
\newtheorem{remark}{Remark}[section]
\newtheorem{lemma}{Lemma}[section]
\newtheorem{assump}{Assumption}[section]
\numberwithin{remark}{section} \numberwithin{proposition}{section}
\numberwithin{corollary}{section}
\renewcommand{\S}{\mathcal{S}}
\newcommand {\R}{\mathbb{R}}
\newcommand {\F}{\mathcal{F}}
\newcommand {\p}{\mathbb{P}}
\newcommand {\E}{\mathbb{E}}
\newcommand{\diff}{{\rm d}}
\newcommand{\1}{\mbox{1}\hspace{-0.25em}\mbox{l}}
\newcommand{\lev}{L\'{e}vy }
\title{Optimal Stopping When the Absorbing Boundary is Following
 After}
\author[M. Egami]{Masahiko Egami}
\address[M. Egami]{Graduate School of Economics,
Kyoto University, Sakyo-Ku, Kyoto, 606-8501, Japan}
\email{egami@econ.kyoto-u.ac.jp}
\urladdr{http://www.econ.kyoto-u.ac.jp/{\textasciitilde}egami/}
\thanks{First draft: September 3, 2012 ; this version: August 18, 2014.  \\ TThis work is in part supported
by Grant-in-Aid for Scientific Research (B) No. 23330104 and No. 26285069, Japan Society for the Promotion of Science.
We thank the participants for their valuable comments in the workshop ``Topics in \lev and jump models" at Osaka University.}
\author[T. Oryu]{Tadao Oryu}
\address[T. Oryu]{Graduate School of Economics,
Kyoto University, Sakyo-Ku, Kyoto, 606-8501, Japan}
\email{oryu.tadao.27r@st.kyoto-u.ac.jp}
\date{}
\begin{document}
\begin{abstract}
We consider a new type of optimal stopping problems where the absorbing boundary moves as the state process $X$ attains new maxima $S$.  More specifically, we set the absorbing boundary as $S - b$ where $b$ is a certain constant.  This problem is naturally connected with excursions from zero of the reflected process $S-X$.  We examine this constrained optimization with the state variable $X$ as a spectrally negative \lev process.
The problem is in nature a two-dimensional one.  The threshold strategy given by the path of \emph{just} $X$ is not in fact optimal. It turns out, however,  that we can reduce the original problem to
an infinite number of one-dimensional optimal stopping problems, and we find explicit solutions.

This work is motivated by the bank's profit maximization with the constraint that it maintain a certain level of leverage ratio.  When the bank's asset value severely deteriorates, the bank's required capital requirement shall be violated.  This situation corresponds to $X<S-b$ in our setting.  This model may well describe a real-life situation where even a large bank can fail because the absorbing boundary is keeping up with the size of the bank.
\end{abstract}

\maketitle \noindent \small{\textbf{Key words:} Optimal stopping; excursion theory;
 spectrally negative \lev processes; scale functions.\\
\noindent Mathematics Subject Classification (2010) : Primary: 60G40
Secondary: 60J75 }\\

\section{Introduction}\label{sec:introduction}
The literature about optimal stopping problems and their applications is immense. In an infinite horizon problem, with one-dimensional continuous diffusions as the state variable,
a full characterization of the value function and of optimal stopping rule is known and the methodology for solution has been established.  See, for example, Dynkin \cite{dynkin}, Alvarez \cite{alvarez2}, Dayanik and Karatzas \cite{DK2003}.
For \emph{spectrally negative \lev processes}, or \lev processes with only negative jumps, a number of authors have succeeded in extending the classical results by using the scale functions.  We just name a few here : \cite{Baurdoux2008,Baurdoux2009}  for stochastic games,  \cite{Avram_et_al_2007, Kyprianou_Palmowski_2007, Loeffen_2008}  for the optimal dividend problem, \cite{alili-kyp, avram-et-al-2004} for American and Russian options,  and \cite{Egami-Yamazaki-2010-1, Kyprianou_Surya_2007} for credit risk.  However, the solution techniques presented in each paper are more or less problem-specific and no characterization of the value function is yet  known.
If the problem involves \emph{two} state variables, then even for continuous diffusions, very few things are known in the literature.

  We study a new type of optimal stopping problems.  We let  $X=(X_t, t\ge 0)$ be a spectrally negative \lev process and denote by $Y$ the reflected process
  \[
  Y_t=S_t -X_t
  \] where $S_t=\sup _{u \in [0,t]} X_u\vee s$. We then consider an optimal stopping problem for both $X$ and $S$ in which the absorbing boundary is defined by $(S_t-b, t\ge 0)$ with $b$ as a positive constant.  This means that while $X$ grows and keeps attaining new maxima, the absorbing boundary is accompanying with $S$.  Hence an excursion from $S$, if greater than $b$, would bring $X$ to ruin.  This situation is seen in the real world; for example, several large financial institutions failed in the last crisis in 2007-2008.  One of the reasons is that, while becoming large banks,  they maintain high leverage and accordingly, the banks are not so far way from the bankruptcy threshold. Instead, the bankruptcy threshold  keeps up with the size of the banks.  That is, despite the size of the bank, the risk of bankruptcy is not so much mitigated.  This paper is motivated by this phenomenon.  See Section \ref{sec:bank-example} for details.  While we take the example of banking, one can come up with other applications of this type, as long as the absorbing boundary is determined in relation to the state process' running maxima.  For instance, a gambler may have a policy that he stops betting when his wealth $X$ goes below a certain level $b$ from that day's running maxima $S$.

An excursion theory for spectrally negative \lev processes has been developed recently.  See Bertoin \cite{Bertoin_1996} as a general reference.  More specifically, an exit problem of the reflected process $Y$ was studied by Avram et al. \cite{avram-et-al-2004}, Pistorius \cite{Pistorius_2004} \cite{Pistorius_2007} and Doney \cite{Doney_2005}.

In the above cited papers on optimal stopping problems, the optimal strategy is usually obtained by so-called ``threshold strategy". That is, the player should stop and receive rewards on the first occasion when the state process enters
a stopping region.   In \lev and other jump models, the authors first find the optimal threshold level  and then prove its optimality by verifying the `quasi-variational inequalities'.  See {\O}ksendal and Sulem \cite{sulem}.  Since  the problem at hand involves two dimensions; one is $X$ and the other is $S$, finding and proving the overall optimal strategy may be challenging (as mentioned, no characterization in two-dimensional problems has been found). Recent developments on two-dimensional optimal stopping problems (involving $S$ and $X$) include Ott \cite{ott_2013} and Guo and Zervos \cite{Guo-Zervos_2010}.  In the former paper, the author solves problems including a capped version of the Shepp-Shiryaev problem \cite{shepp-shiryaev-1993}, and the latter paper is another contribution that extends \cite{shepp-shiryaev-1993}.

In our particular situation, by looking at excursions that occur at each level of $S$, we reduce the problem to an infinite number of one-dimensional optimal stopping problems.  We shall then find an explicit form of the solution, thanks to the results by Pistorius \cite{Pistorius_2007}.  It turns out that the optimal stopping region can be shown  in a diagram created by various values of $S$ and $S-X$ (e.g., Figure \ref{l(m)_(x,s)}).

The rest of the paper is organized as follows.  In Section
\ref{sec:model}, we formulate a mathematical model with a review of some important facts of spectrally negative \lev processes, and then
find an optimal threshold level in Section \ref{sec:solution}.  We shall take the example of a bank's optimization in Section \ref{sec:bank-example} and provide an explicit calculation.

\section{Mathematical Model}\label{sec:model} 

Let the spectrally negative Levy process $X=\{X_t;t\geq 0\}$ represent the state variable defined on the probability space
$(\Omega, \F, \p)$, where $\Omega$ is the set of all possible realization of the
stochastic economy, and $\p$ is a probability measure defined on $\F$. We denote by
$\mathbb{F}=\{\F_t\}_{t\ge 0}$ the filtration with respect to which $X$ is adapted and with the usual
conditions being satisfied. The Laplace exponent $\psi$ of $X$ is given by
\[
\psi(\lambda)=\mu\lambda+\frac{1}{2}\sigma^2\lambda^2+\int_{(-\infty,0)}(e^{\lambda x}-1-\lambda x \1_{(x>-1)})\Pi(\diff x),
\]
where $\mu \geq 0$, $\sigma \geq 0$, and $\Pi$ is a measure concentrated on $\R\backslash \{0\}$ satisfying
$\int_{\R}(1\wedge x^2)\Pi(\diff x)<\infty$. It is well-known that $\psi$ is zero at the origin, convex on $\R_+$
and has a right-continuous inverse:
\[
\Phi(q) :=\sup\{\lambda \geq 0: \psi(\lambda)=q\}, \quad q\ge 0.
\]

The running maximum process $S=\{S_t;t\geq 0\}$ is defined by
$S_t=\sup _{u \in [0,t]} X_u\vee s$. In addition, we write $Y$ for the reflected process defined by $Y_t=S_t-X_t$,
and let $\zeta$ be the stopping time defined by
\[\zeta:=\inf \{t\geq 0 : Y_t > b\}, \quad b>0,\]
the time of ruin.  The payoff is composed of three parts; the running income to be received continuously until stopped or absorbed, the terminal reward part
to be received when the process is stopped, and the penalty part incurred when the process is absorbed.

We consider the following optimal stopping problem and the value function $\bar{V}:\R^2 \mapsto \R$ associated with initial values $X_0=x$ and $S_0=s$;
\begin{eqnarray}\label{problem}
\bar{V}(x,s)&=&\sup_{\tau\in\S} \E^{x,s} \left[ \int^{\tau\wedge \zeta}_0 e^{-qt}f(X_t,S_t)\diff t\right.\\ \nonumber
&&\left.+ \1_{\{\tau<\zeta\}}e^{-q\tau}g(X_{\tau},S_{\tau})-\1_{\{\tau\geq\zeta\}}e^{-q\zeta}k(X_{\zeta},S_{\zeta})  \right]
\end{eqnarray}
where $\mathbb{P}^{x,s}(\,\cdot\,):=\mathbb{P}(\,\cdot\,|\,X_0=x, S_0=s)$ and $\E^{x,s}$ is the expectation operator corresponding to $\mathbb{P}^{x,s}$, $q\geq 0$ is the constant discount rate and $\S$ is the set of all $\mathbb{F}$-adapted stopping times.  The running income function
$f:\R^2 \mapsto \R$ is a measurable function that satisfies
\[
\E^{x,s}\left[\int_0^\infty e^{-qt}|f(X_t,S_t)|\diff t\right]<\infty.
\]
The reward function $g:\R^2 \mapsto \R_+$ and the penalty function $k:\R^2 \mapsto \R_+$ are assumed to be measurable. Our main purpose is to calculate $\bar{V}$ and to find the stopping time $\tau^*$ which attains the supremum.

For each Borel measurable function $l:\R\mapsto \R_+$, we define a stopping time $\tau(l)$ by
\begin{equation}\label{eq:tau-l}
\tau(l):=\inf\{t\geq 0:S_t-X_t>l(S_t)\},
\end{equation}
and define a set of stopping times $\S'$ by \[\S':=\{\tau(l):l:\R\mapsto \R_+\}.\]
In other words, $\tau(l)$ is the first time the excursion $S-X$ from level, say $S=s$, becomes greater than some value $l(s)$.

When $l$ is constant, for example, $\bar{l}\equiv c$ on $\R$, we write
\[
\tau_c:=\inf\{t\ge 0: S_t-X_t>c\}.
\]
  In particular, if $\bar{l}\equiv b$, then $\tau_b=\zeta$.
Next we let $\S'(b)$ be the set of stopping times defined by
\[
\S'(b):=\{\tau(l):l(m)\leq b \;{\rm for\; all }\;m\in \R\}.
\]
Note that if $\tau\in\S'(b)$, then $\tau\leq\zeta$. The following lemma shows that it suffices to consider stopping times $\tau\in\S'(b)$.
\begin{lemma}
Let us define $u: \R^2\times \S' \mapsto \R$ by
\begin{eqnarray*}
u(x, s; \tau)&:=&\E^{x, s}\left[ \int^{\tau\wedge \zeta}_0 e^{-qt}f(X_t,S_t)\diff t \right.\\
&&\left.+ \1_{\{\tau<\zeta\}}e^{-q\tau}g(X_{\tau},S_{\tau})-\1_{\{\tau\geq\zeta\}}e^{-q\zeta}k(X_{\zeta},S_{\zeta})  \right]
\end{eqnarray*}
Then for any $\tau\in \S'$, we can find a $\tau'\in \S'(b)$ such that $u(x, s; \tau)=u(x, s; \tau')$.
\end{lemma}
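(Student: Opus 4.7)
Given $\tau=\tau(l)\in\S'$ for some Borel $l:\R\mapsto \R_+$, the natural candidate in $\S'(b)$ is obtained by simply truncating $l$ at the ruin height: set
\[
l'(m):=l(m)\wedge b, \qquad \tau':=\tau(l').
\]
Since $l'(m)\le b$ for every $m\in\R$, we have $\tau'\in\S'(b)$, so the whole task is to verify $u(x,s;\tau)=u(x,s;\tau')$. I would do this by proving the pathwise identity $\tau\wedge\zeta=\tau'\wedge\zeta$ together with $\{\tau<\zeta\}=\{\tau'<\zeta\}$ and $\tau=\tau'$ on this event; given these three statements, the three components of $u$ (running reward, terminal reward, penalty) coincide term by term and the identity of expectations is immediate.

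The one substantive point is the pathwise identification, which I would establish as follows. On $\{t<\zeta\}$ we have $Y_t=S_t-X_t\le b$ by the definition of $\zeta$. Since $l'\le l$, any time $t<\zeta$ with $Y_t>l(S_t)$ automatically satisfies $Y_t>l'(S_t)$. Conversely, if $Y_t>l'(S_t)$ while $t<\zeta$, there are two cases: if $l(S_t)\le b$ then $l'(S_t)=l(S_t)$ and the inequality $Y_t>l(S_t)$ holds directly; if $l(S_t)>b$ then $l'(S_t)=b$, which would force $Y_t>b$, contradicting $t<\zeta$. Hence on $[0,\zeta)$ the stopping conditions $\{Y_t>l(S_t)\}$ and $\{Y_t>l'(S_t)\}$ coincide, which yields $\tau\wedge\zeta=\tau'\wedge\zeta$ pathwise; the remaining two identities are direct consequences.

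I do not expect any serious obstacle: the argument is entirely a pathwise manipulation that uses only the definitions of $\zeta$, $\tau(\cdot)$, and the truncation, with no appeal to the structure of $X$ as a spectrally negative \lev process, nor to scale functions or excursion theory. The only mild subtlety is a careful bookkeeping of strict versus non-strict inequalities at the borderline level $l(S_t)=b$, but since $l$ and $l'$ agree wherever $l(m)\le b$, this case contributes nothing extra. Measurability of $l'$ follows from that of $l$, so $\tau'$ is indeed an $\Fb$-stopping time in $\S'(b)$.
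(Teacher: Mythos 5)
Your proof is correct, and it takes essentially the same approach as the paper — truncate $l$ at $b$ and observe that the truncation cannot be felt on $[0,\zeta)$ because $Y_t\le b$ there. In fact your version is more complete: the paper's proof only considers modifying $l$ at a single point $s$ (where $l(s)>b$ is replaced by $b$), which does not by itself reach a general $l\wedge b$, whereas you perform the truncation globally and verify the pathwise identities $\tau\wedge\zeta=\tau'\wedge\zeta$, $\{\tau<\zeta\}=\{\tau'<\zeta\}$, and $\tau=\tau'$ on $\{\tau<\zeta\}$ directly. Nothing is missing.
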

\begin{proof}
Set functions $l_1$ and $l_2$ by $l_1(s)>l_2(s)=b$ for some $s$ and $l_1(m)=l_2(m)$ on $m\neq s$. Then it is clear from the definition that $\tau(l_1)<\zeta$ if and only if $\tau(l_2)<\zeta$, and $\tau(l_1)=\tau(l_2)$ on $\{\tau(l_1)<\zeta\}$. Hence the right hand side of
(\ref{problem}) for $\tau=\tau(l_1)$ and $\tau=\tau(l_2)$ are equal to each other. Hence the lemma is proved.
\end{proof}

\subsection{Note on the Optimal Strategy}\label{subsec:optimality}  
We will reduce the original problem \eqref{problem} to an infinite number of one-dimensional optimal stopping problem and discuss the optimality of the
proposed strategy \eqref{eq:tau-l}.  Let us denote by $\bar{f}:\R^2\mapsto\R$ the $q$-potential of $f$ where
\[
\bar{f}(x,s):=\E^{x,s} \left[ \int^{\infty}_0 e^{-qt}f(X_t,S_t)\diff t \right].
\]
From the strong Markov property of $(X,S)$, we have
\begin{eqnarray}\label{eq:potential-rewrite}
&&\E^{x,s} \left[ \int^{\tau\wedge \zeta}_0 e^{-qt}f(X_t,S_t)\diff t \right]\\
&=&\E^{x,s} \left[\int^{\infty}_0 e^{-qt}f(X_t,S_t)\diff t - \int^{\infty}_{\tau\wedge \zeta} e^{-qt}f(X_t,S_t)\diff t \right]\nonumber\\
&=&\bar{f}(x,s)-\E^{x,s} \left[\E \left[\int^{\infty}_{\tau\wedge \zeta} e^{-qt}f(X_t,S_t)\diff t \Bigm| \mathcal{F}_{\tau\wedge \zeta}\right] \right]\nonumber\\
&=&\bar{f}(x,s)-\E^{x,s} \left[e^{-q(\tau\wedge \zeta)}\E^{X_{\tau\wedge \zeta},S_{\tau\wedge \zeta}}\left[\int^{\infty}_0 e^{-qt}f(X_t,S_t)\diff t \right] \right]\nonumber\\
&=&\bar{f}(x,s)-\E^{x,s} \left[e^{-q(\tau\wedge \zeta)}\bar{f}(X_{\tau\wedge \zeta},S_{\tau\wedge \zeta})\right]\nonumber\\
&=&\bar{f}(x,s)-\E^{x,s} \left[\1_{\{\tau<\zeta\}}e^{-q\tau}\bar{f}(X_{\tau},S_{\tau})+\1_{\{\zeta\le \tau\}}e^{-q\zeta}\bar{f}(X_{\zeta},S_{\zeta})\right].\nonumber
\end{eqnarray}
Hence the value function $\bar{V}$ can be written as
\begin{eqnarray}\label{eq:value-function-rewritten}
\bar{V}(x,s)&=&\bar{f}(x,s)+V(x,s),\nonumber
\end{eqnarray}
where
\begin{align}\label{eq:V-modified}
\hspace{0.5cm} V(x,s):=\sup_{\tau\in\S}\E^{x,s}\left[\1_{\{\tau<\zeta\}}e^{-q\tau}(g-\bar{f})(X_{\tau},S_{\tau})-\1_{\{\zeta\le \tau\}}e^{-q\zeta}(k+\bar{f})(X_{\zeta},S_{\zeta})\right].
\end{align}
Since $\bar{f}(x, s)$ has nothing to do with the choice of $\tau$, we concentrate on $V(x, s)$.

By the dynamic programming principle, we can write $V(x,s)$ as
\begin{align}\label{eq:dp}
V(x,s)
&=\sup_{\tau\in\S}\E^{x,s}\left[\1_{\{\tau<\theta\}}\1_{\{\tau<\zeta\}}e^{-q\tau}(g-\bar{f})(X_{\tau},S_{\tau})\right.\\
&\left.-\1_{\{\zeta<\theta\}}\1_{\{\zeta\leq\tau\}}e^{-q\zeta}(k+\bar{f})(X_{\zeta},S_{\zeta})+\1_{\{\theta<\tau\wedge\zeta\}}e^{-q\theta}V(X_{\theta},S_{\theta})\nonumber\right],
\end{align}
for any stopping time $\theta\in\S$. See, for example, Pham \cite{Pham-book} page 97. Now we set $\theta =T_s$ in (\ref{eq:dp}).
For each level $S=s$ from which an excursion occurs, the value $S$ does not change during the excursion.
Hence, during the first excursion interval from $S_0=s$, $\zeta=T^-_{s-b}$ and $S_t=s$ for any $t\leq T_s$, and (\ref{eq:dp}) can be written as the following one-dimensional problem for the state process $X$;
\begin{eqnarray}\label{eq:one-dim-version}
V(x,s)&=&\sup_{\tau\in\S}\E^{x,s}\left[\1_{\{\tau<T_s\}}\1_{\{\tau<T^-_{s-b}\}}e^{-q\tau}(g-\bar{f})(X_{\tau},s)\right.\\
&&-\1_{\{T^-_{s-b}<T_s\}}\1_{\{T^-_{s-b}\leq \tau\}}e^{-q\tau}(k+\bar{f})(X_{T^-_{s-b}},s)\nonumber\\
&&\left.+\1_{\{T_s<\tau\wedge T^-_{s-b}\}}e^{-qT_s}V(s,s)\right].\nonumber
\end{eqnarray}
Now we can look at \emph{only} the process $X$ and find $\tau^*\in \S$.

In relation to \eqref{eq:one-dim-version}, we consider the following one-dimensional optimal stopping problem as for $X$ and its value function $\widehat{V}:\R^2\mapsto \R$;
\begin{align}\label{eq:Vhat}
\widehat{V}(x,s)&=\sup_{\tau\in\S}\E^{x,s}\left[\1_{\{\tau<T_s\}}\1_{\{\tau<T^-_{s-b}\}}e^{-q\tau}(g-\bar{f})(X_{\tau},s)\right.\\
&\left.-\1_{\{T^-_{s-b}<T_s\}}\1_{\{T^-_{s-b}\leq \tau\}}e^{-q\tau}(k+\bar{f})(X_{T^-_{s-b}},s)+\1_{\{T_s<\tau\wedge T^-_{s-b}\}}e^{-qT_s}K\right]\nonumber,
\end{align}
where $K\geq 0$ is a constant. Then the following lemma provides a sufficient condition for a threshold strategy to be optimal
for \eqref{eq:Vhat}. See also Theorem 2.2 in {\O}ksendal and Sulem \cite{sulem} for this type of \emph{verification theorem}.  Note that $V=\widehat{V}$ holds when $K=V(s,s)$, and $V(s,s)$ can be obtained by our solution method offered in Section 3.
\begin{lemma}\label{vi}
{\rm Fix some $s\in\R$. If there exist $z^*\in(-\infty,s]$ and a function $w\in\mathcal{C}^1((-\infty,s])\cap
\mathcal{C}^2((-\infty,s)\backslash\{z^*\})$ such that
\begin{itemize}
\item[(i)] $w(s)=K,$
\item[(ii)] $(\mathcal{A}-q)w(x)=0\text{ and }w(x)>(g-\bar{f})(x,s) \text{ on } x\in(z^*,s),$
\item[(iii)] $(\mathcal{A}-q)w(x)<0\text{ and }w(x)=(g-\bar{f})(x,s) \text{ on } x\in[s-b,z^*],$
\item[(iv)] $w(x)=-(k+\bar{f})(x,s) \text{ on } x\in(-\infty,s-b),$
\end{itemize}
then $w(x)=\widehat{V}(x,s)$ for every $x\in(-\infty,s]$ and the $\mathbb{F}$-stopping time $\tau^*=\inf\{t\geq 0;
X_t<z^*\}$ gives supremum in (\ref{eq:Vhat}).}
Note that
\[
\mathcal{A}w(x):=\mu w'(x)+\frac{\sigma^2}{2}w''(x)+\int_0^\infty\Pi(\diff y)[w(x+y)-w(x)-yw'(x)\1_{\{-1<y\}}].
\]
\end{lemma}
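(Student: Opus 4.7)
The plan is a standard two-step verification: first show $w(x)\ge \widehat V(x,s)$ for every $\tau\in\S$, then specialize to $\tau=\tau^*$ to obtain equality. For $\tau\in\S$, set $\sigma:=\tau\wedge T_s\wedge T^-_{s-b}$ and apply the Meyer--It\^o change-of-variable formula to $e^{-qt}w(X_t)$, localizing with $\theta_n\uparrow\infty$:
\begin{align*}
\E^{x,s}\!\left[e^{-q(\sigma\wedge\theta_n)}w(X_{\sigma\wedge\theta_n})\right]=w(x)+\E^{x,s}\!\left[\int_0^{\sigma\wedge\theta_n}\!e^{-qt}(\mathcal A-q)w(X_{t-})\,\diff t\right].
\end{align*}
Since $X_{t-}\in(-\infty,s)$ for $t<\sigma$, conditions (ii)--(iii) force the integrand to be $\le 0$. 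Sending $n\to\infty$ by dominated convergence---legitimized by the $q$-potential integrability of $f$ (which defines $\bar f$) together with the boundedness of $e^{-qt}$ and nonnegativity of $g,k$---yields $\E^{x,s}[e^{-q\sigma}w(X_\sigma)]\le w(x)$.

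I then split $\{\sigma<\infty\}$ according to which of $\tau,T_s,T^-_{s-b}$ realizes the minimum. Spectral negativity gives $X_{T_s}=s$, so (i) provides $w(X_{T_s})=K$ on $\{T_s<\tau\wedge T^-_{s-b}\}$; (iv) provides $w(X_{T^-_{s-b}})=-(k+\bar f)(X_{T^-_{s-b}},s)$ on $\{T^-_{s-b}<T_s,\,T^-_{s-b}\le\tau\}$ (the process lands strictly below $s-b$ because the excursion height exceeds $b$); and (ii)--(iii) give $w(X_\tau)\ge (g-\bar f)(X_\tau,s)$ on $\{\tau<T_s\wedge T^-_{s-b}\}$. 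Assembling the three contributions produces $w(x)\ge \widehat V(x,s)$. For equality, specialize to $\tau=\tau^*$: on $[0,\tau^*)$, $X_{t-}\in(z^*,s]$ where $(\mathcal A-q)w=0$ by (ii), so the drift term vanishes; on $\{\tau^*<T_s\wedge T^-_{s-b}\}$, spectral negativity places $X_{\tau^*}\in[s-b,z^*]$ and (iii) gives $w(X_{\tau^*})=(g-\bar f)(X_{\tau^*},s)$ with equality. Each earlier inequality becomes an equality, so $w(x)=\widehat V(x,s)$ and $\tau^*$ is optimal.

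The main obstacle is the application of It\^o's formula at the kink $z^*$, where $w$ is only $C^1$. This is the classical smooth-/continuous-fit situation for spectrally negative L\'evy processes: when $X$ is of unbounded variation, $C^1$ pasting is precisely the smooth-fit condition, and one approximates $w$ by $C^2$ functions that agree with $w$ outside a shrinking neighborhood of $z^*$, the correction vanishing because the occupation time at the single point $z^*$ is negligible for the quadratic-variation and jump-compensator terms; when $X$ is of bounded variation, only continuous fit is required, so $C^1$ already suffices and the formula applies directly. A secondary point is uniform integrability for the localization limit, which follows from the integrability hypothesis on $\bar f$, the nonnegativity of $g$ and $k$, and the boundedness of $e^{-qt}$.
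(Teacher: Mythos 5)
Your proposal follows essentially the same verification route as the paper's own proof: apply an It\^o-type change-of-variable formula to $e^{-qt}w(X_t)$ stopped at $\tau\wedge T_s\wedge T^-_{s-b}$, invoke $(\mathcal A-q)w\le 0$ on $(-\infty,s]$ from (ii)--(iii) to obtain the supermartingale inequality, identify the terminal values via (i)--(iv) and spectral negativity (creeping up to $s$ at $T_s$, landing in $[s-b,z^*]$ at $\tau^*$), and specialize to $\tau^*$ to turn every inequality into an equality. The only cosmetic difference is that you localize with $\theta_n$ and pass to the limit by dominated convergence, and you flag the mollification needed to apply It\^o across the $C^1$-but-not-$C^2$ point $z^*$; the paper instead applies It\^o directly, observes that the stopped process lives in the bounded interval $[s-b,s]$ so that the Brownian and compensated-jump integrals are true martingales, and does not discuss the regularity of $w$ at $z^*$. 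Your treatment is a modest strengthening on those two technical points, but the argument and decomposition are identical in substance.
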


\begin{proof}
First we prove $w(x)\geq \widehat{V}(x,s)$ for every $x\in \R_+$. The It\^{o}'s rule (see e.g. Cont and Tankov\cite{cont-tankov} page 277) gives us
\begin{eqnarray*}
&&e^{-q(t\wedge T_s\wedge T^-_{s-b})}w(X_{t\wedge T_s\wedge T^-_{s-b}})\\&=&w(X_0)-\int^{t\wedge T_s\wedge T^-_{s-b}}_0 qe^{-qu}w(X_u)\diff u \\
&&+\mu\int^{t\wedge T_s\wedge T^-_{s-b}}_0e^{-qu}w'(X_u)\diff u+\sigma\int^{t\wedge T_s\wedge T^-_{s-b}}_0e^{-qu}w'(X_u)\diff B_u \\
&&+\frac{\sigma^2}{2}\int^{t\wedge T_s\wedge T^-_{s-b}}_0e^{-qu}w''(X_u)\diff u \\
&&+\int^{t\wedge T_s\wedge T^-_{s-b}}_0\int^{\infty}_0\diff u\Pi(\diff y)e^{-qu}[w(X_u+y)-w(X_{s-})-yw'(X_{s-})\1_{\{-1<y\}}] \\
&&+\int^{t\wedge T_s\wedge T^-_{s-b}}_0\int^{\infty}_0(M(\diff u, \diff y)-\diff u\Pi(\diff y))e^{-qu}[w(X_u+y)-w(X_{u-})] \\
&=&w(X_0)-\int^{t\wedge T_s\wedge T^-_{s-b}}_0 qe^{-qu}w(X_u)\diff u \\
&&+\int^{t\wedge T_s\wedge T^-_{s-b}}_0e^{-qu}w'(X_u)\diff u+\sigma\int^{t\wedge T_s\wedge T^-_{s-b}}_0e^{-qu}w'(X_u)\diff B_u \\
&&+\int^{t\wedge T_s\wedge T^-_{s-b}}_0e^{-qu}(\mathcal{A}-q)w(X_u)\diff u \\
&&+\int^{t\wedge T_s\wedge T^-_{s-b}}_0\int^{\infty}_0(M(\diff u, \diff y)-\diff u\Pi(\diff y))e^{-qu}[w(X_u+y)-w(X_{u-})]
\end{eqnarray*}
where we denote by $M$ the Poisson random measure associated with $X$.  Since the process $\{X_{t\wedge T_s\wedge T^-_{s-b}}:t\geq0\}$ does not leave the interval $[s-b,s]$, the  integrals with respect to the Brownian motion $B$ and the compensated jump measure (i.e., the last term) are martingales. Therefore, by taking expectations,  we have
\begin{eqnarray}\label{eq:vi1}
w(x)&=&\E^{x,s}\left[e^{-q(t\wedge T_s\wedge T^-_{s-b})}w(X_{t\wedge T_s\wedge T^-_{s-b}}) \right]\\&&-\E^{x,s}\left[\int^{t\wedge T_s\wedge T^-_{s-b}}_0e^{-qu}(\mathcal{A}-q)w(X_u)\diff u\right].\nonumber
\end{eqnarray}  By the assumption
\[
(\mathcal{A}-q)w(x)\leq 0, \quad x\in(-\infty,s],
\]
we have
\begin{align}\label{eq:vi2}
w(x)\geq\E^{x,s}\left[ e^{-q(\tau\wedge T_s\wedge T^-_{s-b})}w(X_{\tau\wedge T_s\wedge T^-_{s-b}}) \right],\; x\in(-\infty,s],\tau\in\mathcal{S}.
\end{align}
Now, for any stopping time $\tau\in\mathcal{S}$, by using (i)-(iv),
\begin{align}\label{eq:vi3}
w(x)&\ge \E^{x,s}\left[ e^{-q(\tau\wedge T_s\wedge T^-_{s-b})}w(X_{\tau\wedge T_s\wedge T^-_{s-b}}) \right]\\
&\ge \E^{x,s}\left[\1_{\{\tau<T_s\}}\1_{\{\tau<T^-_{s-b}\}}e^{-q\tau}(g-\bar{f})(X_{\tau},s)\right.\nonumber\\
&\left.-\1_{\{T^-_{s-b}<T_s\}}\1_{\{T^-_{s-b}\leq \tau\}}e^{-qT^-_{s-b}}(k+\bar{f})(X_{T^-_{s-b}},s)+\1_{\{T_s<\tau\wedge T^-_{s-b}\}}e^{-qT_s}K\right]\nonumber.
\end{align}
Taking the supremum over the set $\mathcal{S}$, we have $w(x)\geq\widehat{V}(x,s)$

On the other hand, if we substitute $\tau^*=\inf\{t\geq 0;X_t<z^*\}$ for $\tau$ in (\ref{eq:vi1})-(\ref{eq:vi3}), then all the inequalities are satisfied as
equalities thanks to the assumptions (i)-(iv). Therefore, $w(x)=\widehat{V}(x,s)$ for every $x\leq s$.
\end{proof}

To use Lemma \ref{vi}, one usually constructs a candidate  $w(\cdot)$ and proves the required inequalities (ii) and (iii). However, for the optimal stopping
problems in spectrally negative \lev models,  this procedure is nontrivial and problem-specific, depending on various data such as functions $f, g, k$ and process $X$.
It is because no general results about the optimality of threshold strategy have been proven. Accordingly, in this paper, having thus far characterized our two-dimensional problem \eqref{problem} as a set of one-dimensional optimal stopping problems \eqref{eq:one-dim-version}, we shall focus on and contribute  to obtaining, in the  general setting,  an explicit form of solution \eqref{eq:final-form} under threshold
strategies. Note that, for our problem, optimality of \eqref{eq:final-form} for \eqref{eq:one-dim-version} (and hence for \eqref{problem}) is given by verifying the conditions in Lemma \ref{vi} with $K=V(s, s)$.

Recall that, in the linear diffusion case, a full characterization of the value function and of optimal stopping
rule are  known and the methodology for solution has been established;
an optimal stopping rule  is given by the threshold strategy in a very general setup.
See Dayanik and Karatzas \cite{DK2003}; Propositions 5.7 and 5.14. See also Pham \cite{Pham-book}; Section 5.2.3.  Hence at
least, if $X$ has no jump (that is, $X$ is Brownian motion with
drift), the solution we derive is an optimal strategy for the function
\eqref{problem}.

\subsection{Scale functions} \label{subsec:scale_functions}
We review some mathematically important facts before solving the problem.  Associated with every spectrally negative \lev process, there exists a ($q$-)scale function
%
\begin{eqnarray*}
W^{(q)}: \R \mapsto \R; \quad q\ge 0,
\end{eqnarray*}
that is continuous, strictly increasing on $[0,\infty)$  and $0$ on $(-\infty,0)$. It is
uniquely determined by
\begin{eqnarray*}
\int_0^\infty e^{-\beta x} W^{(q)}(x) \diff x = \frac 1
{\psi(\beta)-q}, \qquad \beta > \Phi(q).
\end{eqnarray*}

Fix $a > x > 0$ and define
\begin{equation}\label{eq:T-stop}
 T_a:=\inf\{t\ge 0: X_t> a\} \quad {\rm and}\quad T_0^{-}:=\inf\{t\ge 0: X_t
 <0\}.
\end{equation}
then we have
\begin{eqnarray}\label{eq:exit-time}
\E^x \left[ e^{-q T_a} 1_{\left\{ T_a < T_0^{-}, \, T_a < \infty
\right\}}\right] = \frac {W^{(q)}(x)}  {W^{(q)}(a)}
\end{eqnarray}
\textrm{and}
\begin{eqnarray}
\E^x \left[ e^{-q  T_0^{-}} 1_{\left\{ T_a>
 T_0^{-}, \, T_0^{-} < \infty \right\}}\right] = Z^{(q)}(x) - Z^{(q)}(a) \frac {W^{(q)}(x)}
{W^{(q)}(a)},
\end{eqnarray}
where
\begin{eqnarray*}\label{eq:Z-q-function}
Z^{(q)}(x) := 1 + q \int_0^x W^{(q)}(y) \diff y, \quad x \in \R.
\end{eqnarray*}
Here we have $Z^{(q)}(x)=1$ on $(-\infty,0]$. We also have
\begin{eqnarray}
\E^x \left[ e^{-q  T_0^{-}} \right] = Z^{(q)}(x) - \frac q {\Phi(q)}
W^{(q)}(x), \quad x > 0. \label{laplace_tau_0}
\end{eqnarray}

 In particular, $W^{(q)}$ is continuously differentiable on $(0,\infty)$ if $\Pi$ does not have atoms and $W^{(q)}$ is twice-differentiable on $(0,\infty)$ if $\sigma > 0$; see, e.g., Chan et al.\cite{Chan_2009}.  Throughout this paper, we assume the former:
\begin{assump}
We assume that $\Pi$ does not have atoms.
\end{assump}

Fix $q > 0$.  The scale function increases exponentially;
\begin{eqnarray}
W^{(q)} (x) \sim \frac {e^{\Phi(q) x}} {\psi'(\Phi(q))} \quad
\textrm{as } \; x \uparrow \infty.
\label{scale_function_asymptotic}
\end{eqnarray}
There exists a (scaled) version of the scale function $ W_{\Phi(q)}
= \{ W_{\Phi(q)} (x); x \in \R \}$ that satisfies
\begin{eqnarray}
W_{\Phi(q)} (x) = e^{-\Phi(q) x} W^{(q)} (x), \quad x \in \R \label{W_scaled}
\end{eqnarray}
and
\begin{eqnarray*}
\int_0^\infty e^{-\beta x} W_{\Phi(q)} (x) \diff x &= \frac 1
{\psi(\beta+\Phi(q))-q}, \quad \beta > 0.
\end{eqnarray*}
Moreover $W_{\Phi(q)} (x)$ is increasing, and as is clear from
(\ref{scale_function_asymptotic}),
\begin{eqnarray}
W_{\Phi(q)} (x) \uparrow \frac 1 {\psi'(\Phi(q))} \quad \textrm{as }
\; x \uparrow \infty. \label{scale_function_asymptotic_version}
\end{eqnarray}

Regarding its behavior in the neighborhood of zero, it is known that
\begin{eqnarray}\label{eq:W0}
W^{(q)} (0) = \left\{ \begin{array}{ll} 0, & \textrm{unbounded
variation} \\ \frac 1 {d}, & \textrm{bounded variation}
\end{array} \right\}, \quad
\end{eqnarray}
where $d:= \mu - \int_{(-1,0)}x\Pi(\diff x)$, and
\begin{eqnarray}
W_+^{(q)'} (0) =
\left\{ \begin{array}{ll}  \frac 2 {\sigma^2}, & \sigma > 0 \\
\infty, & \sigma = 0 \; \textrm{and} \; \Pi(0,\infty) = \infty \\
\frac {q + \Pi(0,\infty)} {d^2}, & \textrm{compound Poisson}
\end{array} \right\}; \label{at_zero}
\end{eqnarray}
see Lemmas 4.3-4.4 of Kyprianou and Surya
\cite{Kyprianou_Surya_2007}.
For a comprehensive account of the scale function, see
\cite{Bertoin_1996,Bertoin_1997, Kyprianou_2006, Kyprianou_Surya_2007}. See \cite{Egami_Yamazaki_2010_2, Surya_2008} for numerical methods for computing the
scale function.

\section{Explicit Solution}\label{sec:solution}
Now we look to an explicit solution of $\bar{V}$ for $\tau\in\S'$.  Let us introduce the probability measure $\widetilde{\p}^{x,s}$ such that the
Radon-Nikodym derivative between $\widetilde{\p}^{x,s}$ and $\p^{x,s}$ is defined by
\[
\frac{\diff \widetilde{\p}^{x,s}}{\diff \p^{x,s}}\biggm|_{\mathcal{F}_t}=e^{-qt+\Phi(q)(X_t-x)}.
\]
Under $\widetilde{\p}^{x,s}$, $X$ has the Laplace exponent $\widetilde{\psi}$ defined by
\begin{eqnarray*}
\widetilde{\psi}(\lambda)&=&\psi(\lambda+\Phi(q))-\psi(\Phi(q))\\
&=&\left(\sigma^2\Phi(q)+\mu+\int_{(-\infty,0)}x(e^{\Phi(q)x}-1)\1_{\{x>-1\}}\Pi(\diff x)\right)\lambda \\
&&+\frac{1}{2}\sigma^2\lambda^2+\int_{(-\infty,0)}(e^{\lambda x}-1-\lambda x\1_{\{x>-1\}})e^{\Phi(q)x}\Pi(\diff x).
\end{eqnarray*}
Note that since $\widetilde{\psi}'(0+)=\psi'(\Phi(q)+)>0$, $X$ drifts to $\infty$ for $q\geq 0$.

Let $W_{\Phi(q)}:\R \mapsto \R$ be the scale function of $X$ under $\widetilde{\p}^{x,s}$, that is, $W_{\Phi(q)}$ has the Laplace transform
\[
\int^{\infty}_{0}e^{-\lambda x}W_{\Phi(q)}(x)dx=\frac{1}{\widetilde{\psi}(\lambda)}.
\]
In addition, we define the process $\eta=\{\eta_t;t\geq 0\}$ of the height of the excursion as
\[
\eta_u:= \sup\{(S-X)_{T_{u-}+w} : 0\leq w \leq T_u - T_{u-}\}, \text{ if \;}  T_u > T_{u-},
\]
and $\eta_u=0$ otherwise, where $T_{u-}:=\inf\{t\ge0:X_t\ge u\}=\lim_{m\rightarrow u-}T_{m}$.
Then $\eta$ is a Poisson point process, and we denote its
characteristic measure on $\widetilde{\p}^{x,s}$ by $\tilde{\nu}$.
It is known that there is a relation between $W_{\Phi(q)}$ and
$\tilde{\nu}$:
\begin{equation}\label{Scalefunction-Itomeasure}
W_{\Phi(q)}(x) = c \exp \left(-\int^{\infty}_{x} \tilde{\nu}
[u,\infty)\diff u \right),
\end{equation}
where $c$ is some positive constant. See Bertoin \cite{Bertoin_1996} page 195 for the explanation of this identity.

If $\tau\in\S'(b)$, equation \eqref{eq:potential-rewrite} can be written as
\begin{eqnarray*}
&&\E^{x,s} \left[ \int^{\tau\wedge \zeta}_0 e^{-qt}f(X_t,S_t)\diff t \right]\\
=&&\bar{f}(x,s)-\E^{x,s} \left[\1_{\{\tau<\zeta\}}e^{-q\tau}\bar{f}(X_{\tau},S_{\tau})+\1_{\{\tau=\zeta\}}e^{-q\tau}\bar{f}(X_{\tau},S_{\tau})\right].
\end{eqnarray*}
Accordingly,  the function $\bar{V}$ can be written as
\begin{eqnarray}\label{eq:value-function-rewritten}
\bar{V}(x,s)&=&\bar{f}(x,s)+V(x,s),\nonumber
\end{eqnarray}
where
\begin{eqnarray*}
V(x,s)=\sup_{\tau\in\S'(b)}\E^{x,s}\left[\1_{\{\tau<\zeta\}}e^{-q\tau}(g-\bar{f})(X_{\tau},S_{\tau})-\1_{\{\tau=\zeta\}}e^{-q\tau}(k+\bar{f})(X_{\tau},S_{\tau})\right].
\end{eqnarray*}
\subsection{When \bf{$X_0=S_0$}}

As a first step, we consider the case $X_0=S_0$. Set stopping times
$T_m$ as $T_m=\inf\{t\geq 0:X_t > m\}$ (Recall (\ref{eq:T-stop})).
From the strong Markov property of $(X,S)$, when $\tau(l)\in\S'(b)$ and
$S_0=X_0=s$, we have,
\begin{eqnarray}\label{eq:interim}
&&\E^{s,s}\left[\1_{\{\tau(l)<\zeta\}}e^{-q\tau(l)}(g-\bar{f})(X_{\tau(l)},S_{\tau(l)})\right]\\
&=&\int^{\infty}_s\E^{s,s}\left[\1_{\{\tau(l)<\zeta, S_{\tau(l)}\in\diff m\}}e^{-q\tau(l)}(g-\bar{f})(X_{\tau(l)},S_{\tau(l)})\right]\nonumber\\
&=&\int^{\infty}_s\E^{s,s}\left[\1_{\{T_m\leq\tau(l)\}}e^{-qT_m}\E^{m,m}\left[e^{-q\tau_{l(m)}}(g-\bar{f})(X_{\tau_{l(m)}},S_{\tau_{l(m)}})\right.\right.\nonumber\\
&&\left.\left.\times\1_{\{S_{\tau_{l(m)}}-X_{\tau_{l(m)}}\leq b, S_{\tau_{l(m)}}\in \diff m\}}\right]\right]\nonumber\\
&=&\int^{\infty}_s\E^{s,s}\left[\1_{\{S_{\tau(l)}\geq m\}}e^{-qT_m}\right]\Big((g-\bar{f})(m-l(m),m)\nonumber\\
&&\times\E^{m,m}\left[e^{-q\tau_{l(m)}}\1_{\{Y_{\tau_{l(m)}-}=l(m), S_{\tau_{l(m)}}\in \diff m\}}\right]\nonumber\\
&&+\int \!\!\! \int_A (g-\bar{f})(m-y+h,m)\nonumber\\
&&\times\left.\E^{m,m}\left[e^{-q\tau_{l(m)}}\1_{\{X_{\tau_{l(m)}}-X_{\tau_{l(m)}-}\in
\diff h, S_{\tau_{l(m)}}\in \diff m, Y_{\tau_{l(m)}-}\in \diff
y\}}\right]\right),\nonumber
\end{eqnarray}
where \[A=\{(y,h)\in\R^2 ; y-h\in[l(m),b], h<0, y\in[0,l(m)]\}.\]

Now we examine each term in the last line of (\ref{eq:interim}).
Since $X$ is a spectrally negative process and $S$ is its running
maximum process, by (\ref{Scalefunction-Itomeasure}) we have, for
$m\geq s$,
\begin{eqnarray}\label{eq:discount}
\hspace{0.5cm} \E^{s,s}\left[\1_{\{S_{\tau(l)}\geq m\}}e^{-qT_m}\right]&=&\widetilde{\E}^{s,s}\left[e^{-(m-s)\Phi(q)}\1_{\{S_{\tau(l)}\geq m\}}\right] \\
&=&e^{-(m-s)\Phi(q)}\widetilde{\mathbb{P}}^{s,s}(S_{\tau(l)}\geq m)\nonumber\\
&=&\exp\left(-\int^m_s \left(\frac{W'_{\Phi(q)}(l(u)+)}{W_{\Phi(q)}(l(u))}+\Phi(q)\right)\diff u \right)\nonumber\\
&=&\exp\left(-\int^m_s \frac{W_+^{(q)'}(l(u))}{W^{(q)}(l(u))}\diff u \nonumber
\right).
\end{eqnarray}

From  Theorems 1 and 2 in Pistorius \cite{Pistorius_2007}, we have
\begin{eqnarray*}
\E^{m,m}&\left[e^{-q\tau_{l(m)}}\1_{\{X_{\tau_{l(m)}}-X_{\tau_{l(m)}-}\in \diff h, S_{\tau_{l(m)}}\in \diff m, Y_{\tau_{l(m)}-}\in \diff y\}}\right]\\
&=\1_{\{y-h>l(m)\}}\Pi (\diff h) \left(W_+^{(q)'}(y)-\frac{W_+^{(q)'}(l(m))}{W^{(q)}(l(m))}W^{(q)}(y)\right)\diff y \diff m,
\end{eqnarray*}
and
\[\E^{m,m}\left[e^{-q\tau_{l(m)}}\1_{\{Y_{\tau_{l(m)}-}=l(m),
S_{\tau_{l(m)}}\in \diff
m\}}\right]=\frac{\sigma^2}{2}\left(\frac{W_+^{(q)'}(l(m))^2}{W^{(q)}(l(m))}-W_+^{(q)''}(l(m))\right).
\]
Putting together, if $\tau(l)\in\S'(b)$, (\ref{eq:interim}) becomes
\begin{eqnarray*}
&&\E^{s,s}\left[\1_{\{\tau(l)<\zeta, S_{\tau(l)}\in\diff m\}}e^{-q\tau(l)}(g-\bar{f})(X_{\tau(l)},S_{\tau(l)})\right]\\
&=&\exp\left(-\int^m_s \frac{W_+^{(q)'}(l(u))}{W^{(q)}(l(u))}\diff u \right)\left(\frac{\sigma^2}{2}\left(\frac{W_+^{(q)'}(l(m))^2}{W^{(q)}(l(m))}-W_+^{(q)''}(l(m))\right)\right.\\
&&\times(g-\bar{f})(m-l(m),m)+\int^{l(m)}_{0}\diff y\int^{y-l(m)}_{y-b}\Pi (\diff h) (g-\bar{f})(m-y+h,m)\\
&&\left.\times\left(W_+^{(q)'}(y)-\frac{W_+^{(q)'}(l(m))}{W^{(q)}(l(m))}W^{(q)}(y)\right)\right) \diff m.
\end{eqnarray*}
In the same way as above, if $\tau(l)\in\S'(b)$, we obtain for the second term of the expectation in (\ref{eq:value-function-rewritten})
\begin{eqnarray*}
&&\E^{x,s}\left[\1_{\{\tau(l)=\zeta, S_{\tau(l)}\in\diff m\}}e^{-q\tau(l)}(\bar{f}+k)(X_{\tau(l)},S_{\tau(l)})\right]\\
&=&\int^{\infty}_s\E^{s,s}\left[\1_{\{T_m\leq\tau(l)\}}e^{-qT_m}\E^{m,m}\left[e^{-q\tau_{l(m)}}(\bar{f}+k)(X_{\tau_{l(m)}},S_{\tau_{l(m)}})\right.\right.\\
&&\left.\left.\times\1_{\{S_{\tau_{l(m)}}-X_{\tau_{l(m)}}> b, S_{\tau_{l(m)}}\in \diff m\}}\right]\right]\\
&=&\exp\left(-\int^m_s \frac{W_+^{(q)'}(l(u))}{W^{(q)}(l(u))}\diff u \right)\int \!\!\! \int_B(\bar{f}+k)(m-y+h,m)\\
&& \times\E^{m,m}\left[e^{-q\tau_{l(m)}}\1_{\{X_{\tau_{l(m)}}-X_{\tau_{l(m)}-}\in \diff h, S_{\tau_{l(m)}}\in \diff m, Y_{\tau_{l(m)}-}\in \diff y\}}\right]\\
&=&\exp\left(-\int^m_s \frac{W_+^{(q)'}(l(u))}{W^{(q)}(l(u))}\diff u \right)\left(\int^{l(m)}_{0}\diff y\int^{y-b}_{-\infty}\Pi (\diff h) \right.\\
&&\times(\bar{f}+k)(m-y+h,m)\left.\left(W_+^{(q)'}(y)-\frac{W_+^{(q)'}(l(m))}{W^{(q)}(l(m))}W^{(q)}(y)\right)\right) \diff m
\end{eqnarray*}
where \[B=\{(y,h)\in\R^2 ; y-h>b, h<0, y\in[0,l(m)]\}.\]

For notational simplicity, let the function $F_m(z):[0,b]\mapsto\R$ defined by
\begin{eqnarray}\label{eq:Fm}
F_m(z)&:=&\frac{\sigma^2}{2}\left(\frac{W_+^{(q)'}(z)^2}{W^{(q)}(z)}-W_+^{(q)''}(z)\right)(g-\bar{f})(m-z,m)\\
&&+\int^{z}_{0}\diff y\int^{y-z}_{y-b}\Pi (\diff h) (g-\bar{f})(m-y+h,m)\nonumber\\
&&\times\left(W_+^{(q)'}(y)-\frac{W_+^{(q)'}(z)}{W^{(q)}(z)}W^{(q)}(y)\right)\nonumber\\
&&-\int^{z}_{0}\diff y\int^{y-b}_{-\infty}\Pi (\diff h)
(\bar{f}+k)(m-y+h,m)\nonumber\\
&&\times\left(W_+^{(q)'}(y)-\frac{W_+^{(q)'}(z)}{W^{(q)}(z)}W^{(q)}(y)\right)\nonumber.
\end{eqnarray}
Hence we have, up to this point, proved the following:
\begin{proposition}
  When $X_0=S_0$, the function
$V(s,s)$ for $\tau\in \S'$ can be represented by
\begin{equation}\label{prop:1}
V(s,s)=\sup_{l}\int^{\infty}_{s}\exp\left(-\int^m_s \frac{W^{(q)}(l(u))}{W_+^{(q)'}(l(u))}\diff u \right)F_m(l(m))\diff m\\
\end{equation} where
$F_m(\cdot)$ is defined in (\ref{eq:Fm})
\end{proposition}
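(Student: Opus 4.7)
The plan is to assemble what has already been set up line by line in the preceding pages and package it as a one-parameter variational formula. By Lemma 2.1 the supremum over $\tau\in\S'$ coincides with the supremum over $\tau(l)\in\S'(b)$, so throughout I may restrict to functions $l:\R\mapsto[0,b]$. Fix such an $l$; the task is then to evaluate
\[
\E^{s,s}\!\left[\1_{\{\tau(l)<\zeta\}}e^{-q\tau(l)}(g-\bar f)(X_{\tau(l)},S_{\tau(l)})-\1_{\{\tau(l)=\zeta\}}e^{-q\tau(l)}(k+\bar f)(X_{\tau(l)},S_{\tau(l)})\right]
\]
and show it equals $\int_s^\infty \exp(-\int_s^m\cdot)\,F_m(l(m))\,\diff m$.

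The first step is to decompose both expectations by conditioning on the value $m=S_{\tau(l)}\in[s,\infty)$ of the running maximum at the stopping instant, writing the unconditional expectation as an integral over $m$. The strong Markov property of $(X,S)$ applied at the first-passage time $T_m$ factors each conditional contribution into a ``transit'' factor $\E^{s,s}[\1_{\{S_{\tau(l)}\ge m\}}e^{-qT_m}]$ and a ``terminal'' factor at level $m$, where the terminal factor is an expectation under $\E^{m,m}$ involving the downward passage time $\tau_{l(m)}$ and its associated overshoot. This reduction is legitimate because on any excursion interval $S$ is constant, so until the running maximum next advances past $m$ the ruin set $\{Y>b\}$ and the stopping set $\{Y>l(m)\}$ depend only on $X$.

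For the transit factor I would use the calculation already displayed in (\ref{eq:discount}): a change of measure to $\widetilde\p^{s,s}$ combined with the Poisson point process structure of the excursion heights $\eta$ and the link (\ref{Scalefunction-Itomeasure}) between its characteristic measure $\tilde\nu$ and $W_{\Phi(q)}$ produces exactly the exponential discount $\exp(-\int_s^m W_+^{(q)'}(l(u))/W^{(q)}(l(u))\,\diff u)$. For the terminal factor I would invoke Theorems 1 and 2 of Pistorius \cite{Pistorius_2007}, which supply the atomic Brownian contribution at $Y_{\tau_{l(m)}-}=l(m)$ and the joint density of $(X_{\tau_{l(m)}}-X_{\tau_{l(m)}-},\,Y_{\tau_{l(m)}-})$ against $\Pi(\diff h)\,\diff y$, weighted by $W_+^{(q)'}(y)-(W_+^{(q)'}(l(m))/W^{(q)}(l(m)))W^{(q)}(y)$.

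The combinatorial bookkeeping is the only nontrivial point: the survival event $\{\tau(l)<\zeta\}$ corresponds to overshoots with $y-h\in[l(m),b]$ (contributing $g-\bar f$), whereas the ruin event $\{\tau(l)=\zeta\}$ corresponds to $y-h>b$ (contributing $-(k+\bar f)$). Splitting the $h$-integral at $h=y-b$ and handling the Brownian atom separately, one recognizes the sum of the three pieces as precisely the function $F_m(l(m))$ defined in (\ref{eq:Fm}). Multiplying by the transit factor, integrating over $m\in[s,\infty)$, and finally taking the supremum over admissible $l$ yields the asserted representation. I expect the main obstacle to be solely this careful case split together with verifying that the tacit Fubini interchange between the $m$-integration and the $(y,h)$-integration is justified by the integrability of $f,g,k$ and the exponential decay of the transit factor.
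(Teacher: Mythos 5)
Your proposal mirrors the paper's own derivation step for step: conditioning on the terminal level $m=S_{\tau(l)}$, applying the strong Markov property at $T_m$, invoking \eqref{eq:discount} for the discount/transit factor, and quoting Theorems~1 and~2 of Pistorius~\cite{Pistorius_2007} for the terminal factor, then recognizing the assembled integrand as $F_m(l(m))$. This is precisely what the text between \eqref{eq:interim} and \eqref{eq:Fm} does, so there is no substantive gap.

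One small remark: your derivation keeps the exponent from \eqref{eq:discount} as $\exp\bigl(-\int_s^m W_+^{(q)\prime}(l(u))/W^{(q)}(l(u))\,\diff u\bigr)$, which is the correct form, whereas the displayed formula \eqref{prop:1} (and the first few lines of the proof of Proposition~3.2) has the ratio inverted to $W^{(q)}/W_+^{(q)\prime}$. The paper's subsequent L'H\^opital computation and the final answer $F_s(l^*(s))W^{(q)}(l^*(s))/W_+^{(q)\prime}(l^*(s))$ are only consistent with the exponent $W_+^{(q)\prime}/W^{(q)}$, so this is a typographical slip in the statement, not in the underlying argument; you should simply note that your derivation establishes the formula with the correct ratio rather than claiming to reproduce the display verbatim. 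Your observation about justifying the Fubini interchange is sound; the paper passes over it silently, but the integrability assumption on $f$ together with the compactness of $[0,b]$ and the exponential decay of the transit factor (via \eqref{scale_function_asymptotic}) supplies what is needed.
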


Recall that $l(s)$ denotes the height of the excursion $Y=S-X$
when $S=s$.  We wish to find, given $s$, the optimal height $l^*(s)$
to stop the process, and to calculate $V(s, s)$ explicitly.
\begin{proposition}\label{prop:2}
  Under $q\geq0$ and $\sigma>0$ instead of Assumption 2.1, suppose further that  $F_s: \R_+\mapsto \R$ is continuous.  Then we have
  \begin{equation}\label{eq:V-explicit}
V(s,s)=\frac{F_s(l^*(s))W^{(q)}(l^*(s))}{W_+^{(q)'}(l^*(s))},
\end{equation}
and $l^*(s)$ is the maximizer of the map $z \mapsto
\frac{F_s(z)W^{(q)}(z)}{W_+^{(q)'}(z)}$ on $[0,b]$.
\end{proposition}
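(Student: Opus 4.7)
My plan is to start from the variational representation of Proposition 3.1. Writing $h(z) := W_+^{(q)'}(z)/W^{(q)}(z)$ and $H(m) := \int_s^m h(l(u))\,\diff u$, and interpreting the exponent in Proposition 3.1 as $-H(m)$ (which is what the derivation culminating in \eqref{eq:discount} actually produces),
\[
V(s,s) = \sup_l J(l), \qquad J(l) := \int_s^\infty e^{-H(m)} F_m(l(m))\,\diff m.
\]

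The key algebraic step is the identity $h(l(m))\,e^{-H(m)}\,\diff m = -\diff\bigl(e^{-H(m)}\bigr)$, which lets us rewrite
\[
J(l) = \int_s^\infty \frac{F_m(l(m))\,W^{(q)}(l(m))}{W_+^{(q)'}(l(m))}\,\bigl(-\diff\,e^{-H(m)}\bigr),
\]
exhibiting $J(l)$ as the integral of the pointwise ratio $F_m(l(m))/h(l(m))$ against the sub-probability measure $-\diff(e^{-H(\cdot)})$ on $[s,\infty)$, of total mass $1-e^{-H(\infty)}\le 1$. Under the hypothesis $\sigma>0$, one has $W^{(q)}(0+)=0$ and $W_+^{(q)'}(0+)=2/\sigma^2$, so $h(0+)=+\infty$; combined with continuity of $F_s$, this makes $z \mapsto F_s(z)\,W^{(q)}(z)/W_+^{(q)'}(z)$ continuous on the compact interval $[0,b]$, and so it attains a maximum at some $l^*(s)\in[0,b]$.

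From this representation I would derive matching bounds. For the upper bound: the pointwise inequality $F_m(l(m))/h(l(m))\le F_s(l^*(s))/h(l^*(s))$, together with the bounded total mass of $-\diff(e^{-H})$, yields $J(l)\le F_s(l^*(s)) W^{(q)}(l^*(s))/W_+^{(q)'}(l^*(s))$. For the matching lower bound, the constant-in-$u$ strategy $l(u)\equiv l^*(s)$ makes $H(m)=(m-s)h(l^*(s))$, and $J(l)$ collapses by explicit integration to $F_s(l^*(s))/h(l^*(s))$. Combining the two bounds delivers the asserted identity $V(s,s) = F_s(l^*(s))W^{(q)}(l^*(s))/W_+^{(q)'}(l^*(s))$ and simultaneously identifies $l^*(s)$ as the maximizer on $[0,b]$ of the indicated map.

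The main obstacle is the competing requirements in the measure-integral form: pointwise optimization of the integrand fixes $l(m)$, which in turn fixes the rate $h(l(m))$ that drives the measure $-\diff(e^{-H})$, so one must verify that both ends of the argument are compatible with the single scalar $F_s(l^*(s))/h(l^*(s))$. This is where $\sigma>0$ enters decisively: the singular limit $h(0+)=+\infty$ provides the analytic flexibility needed to argue that the measure $-\diff(e^{-H})$ can be suitably concentrated near $m=s$, closing the gap between the upper and lower bounds and making the pointwise maximizer at the starting level $s$ the relevant quantity.
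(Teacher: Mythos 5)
Your approach (rewriting $J(l)$ as an integral of $F_m(l(m))W^{(q)}(l(m))/W_+^{(q)'}(l(m))$ against the sub-probability measure $-\diff(e^{-H(m)})$ and sandwiching) is genuinely different from the paper's. The paper instead splits the integral over $[s,s+\eps]$ and $[s+\eps,\infty)$, recognizes the tail as $V(s+\eps,s+\eps)$, introduces the approximation $V_\eps(s)=\sup_{l(s)}\bigl[e^{-\eps h(l(s))}V(s+\eps,s+\eps)+\eps F_s(l(s))\bigr]$, and then extracts the formula from the quotient $\bigl(V_\eps(s)-e^{-\eps h(l^*(s))}V(s+\eps,s+\eps)\bigr)/\bigl(1-e^{-\eps h(l^*(s))}\bigr)$ via L'H\^opital as $\eps\downarrow 0$. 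You were also right to read the exponent in Proposition~3.1 as $W_+^{(q)'}/W^{(q)}$, consistent with \eqref{eq:discount}; the displayed ratio in the proposition statement is transposed.

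However, both halves of your sandwich have a real gap: they treat $F_m$ as if it did not depend on $m$. The asserted pointwise inequality $F_m(l(m))/h(l(m))\le F_s(l^*(s))/h(l^*(s))$ would require $\sup_z F_m(z)/h(z)\le\sup_z F_s(z)/h(z)$ for every $m\ge s$, which is an unstated monotonicity hypothesis, not a consequence of continuity of $F_s$; and for the lower bound, the constant strategy $l\equiv l^*(s)$ gives $J(l)=\int_s^\infty e^{-(m-s)h(l^*(s))}F_m(l^*(s))\,\diff m$, the Laplace transform of $m\mapsto F_m(l^*(s))$ at frequency $h(l^*(s))$, which equals $F_s(l^*(s))/h(l^*(s))$ only if $F_m(l^*(s))$ is constant in $m$. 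Your proposed fix in the last paragraph --- using $h(0+)=+\infty$ to concentrate $-\diff(e^{-H})$ near $m=s$ --- does not close this gap, and in fact works against you: since $\sigma>0$ forces $W^{(q)}(0+)=0$ and $W_+^{(q)'}(0+)=2/\sigma^2$, taking $l$ small near $s$ drives the integrand $F_s(l)\,W^{(q)}(l)/W_+^{(q)'}(l)$ to $0$, not to the maximum $F_s(l^*(s))/h(l^*(s))$. The idea that is needed (and that the paper's argument, for all its informality, isolates) is to compare $V(s,s)$ and $V(s+\eps,s+\eps)$ directly via the dynamic programming decomposition, so that the $m$-dependence of $F_m$ is absorbed into the recursion rather than being frozen at $m=s$.
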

\begin{remark}\normalfont
(i) Note that the maximizer $l^*(s)$ exists on $[0, b]$ since  the map $z \mapsto
F_s(z)W^{(q)}(z)/W_+^{(q)'}(z)$ is continuous due to $W^{(q)}\in C^2$ (by $\sigma>0$) and $[0,b]$ is compact.

(ii) If $q>0$, a sufficient condition for the continuity of $F_s$ is the
continuity of $f, g$ and $k$. This is a consequence of the
continuity of $x\mapsto \E^{x, s}[f(X_t, S_t)]$ for all $t\ge 0$ and $s\in \R_+$, $W^{(q)}\in
C^2$ and
\begin{equation}\label{eq:F-continuity}
|\bar{f}(x, s)-\bar{f}(y, s)|\le
q^{-1}|f(x, s)-f(y, s)|
\end{equation}
for all $x, y \in \R$. If $q=0$, sufficient conditions for the continuity of $F_s$ are more restrictive, for example, the boundedness of $f$.
\end{remark}
\begin{proof}
From the equation (\ref{prop:1}), we have for any $\epsilon >0$,
\begin{eqnarray*}
V(s,s)&=&\sup_{l}\left[\exp\left(-\int_s^{s+\epsilon} \frac{W^{(q)}(l(u))}{W_+^{(q)'}(l(u))}\diff u\right)\right.\\ &&\times\int^{\infty}_{s+\epsilon}\exp\left(-\int^m_{s+\epsilon} \frac{W^{(q)}(l(u))}{W_+^{(q)'}(l(u))}\diff u \right)F_m(l(m))\diff m\\
&&\left.+\int^{s+\epsilon}_{s}\exp\left(-\int^m_{s} \frac{W^{(q)}(l(u))}{W_+^{(q)'}(l(u))}\diff u \right)F_m(l(m))\diff m\right]\\
&=&\sup_{l}\left[\exp\left(-\int_s^{s+\epsilon} \frac{W^{(q)}(l(u))}{W_+^{(q)'}(l(u))}\diff u \right)V(s+\epsilon,s+\epsilon)\right.\\
&&\left.+\int^{s+\epsilon}_{s}\exp\left(-\int^m_{s} \frac{W^{(q)}(l(u))}{W_+^{(q)'}(l(u))}\diff u \right)F_m(l(m))\diff m\right]
\end{eqnarray*}
This expression motivates us to set $V_\epsilon:\R\mapsto\R$ as
\begin{eqnarray*}
V_\epsilon(s):=\sup_{l(s)}\left[\exp\left(-\frac{\epsilon W^{(q)}(l(s))}{W_+^{(q)'}(l(s))}\right)V(s+\epsilon,s+\epsilon)
+\epsilon F_s(l(s))\right].
\end{eqnarray*}
Then we have $\lim_{\epsilon\downarrow 0}V_\epsilon(s)=V(s,s)$. Since $\lim_{\epsilon\downarrow 0}V(s+\epsilon,s+\epsilon)=V(s,s)$,
the optimal threshold $l^*(s)$ should satisfy
\[
\lim_{\epsilon\downarrow 0}V_\epsilon(s)=\lim_{\epsilon\downarrow 0}\left[\exp\left(-\frac{\epsilon W^{(q)}(l^*(s))}{W_+^{(q)'}(l^*(s))}\right)V(s+\epsilon,s+\epsilon)
+\epsilon F_s(l^*(s))\right].
\]
From this equation, we obtain
\begin{eqnarray*}
V(s,s)&=&\lim_{\epsilon\downarrow 0}\frac{V_\epsilon(s)-\exp\left(-\frac{\epsilon W_+^{(q)'}(l^*(s))}{W^{(q)}(l^*(s))}\right)V\left(s+\epsilon, s+\epsilon\right)}{\left(1-\exp\left(-\frac{\epsilon W_+^{(q)'}(l^*(s))}{W^{(q)}(l^*(s))}\right)\right)}\\
&=&\lim_{\epsilon\downarrow 0}\frac{\epsilon F_s(l^*(s))}{\left(1-\exp\left(-\frac{\epsilon W_+^{(q)'}(l^*(s))}{W^{(q)}(l^*(s))}\right)\right)}=\frac{F_s(l^*(s))W^{(q)}(l^*(s))}{W_+^{(q)'}(l^*(s))}
\end{eqnarray*}
where the last equality is obtained by L'H\^{o}pital's rule, and hence $l^*(s)$ is the value which gives supremum to $\frac{F_s(z)W^{(q)}(z)}{W_+^{(q)'}(z)}$.
\end{proof}
\begin{remark}
{\rm $\frac{F_s(z)W^{(q)}(z)}{W_+^{(q)'}(z)}$ is the value for the strategy $l$ with $l(s)=z$ and $l=l^*$ for every $m>s$; that is, this amount is obtained when we stop if $X$ goes below $s-z$ in the excursion at level $S=s$ and use optimal strategy for the higher level $S>s$.}
\end{remark}
\subsection{When $S_0>X_0$}
Finally, let us consider the case of $S_0>X_0$. In this case, $\bar{V}$
can be represented in terms of $\bar{V}(s, s)$ as follows:
\begin{eqnarray}\label{eq:V-x-s}
\bar{V}(x,s)&=&\bar{f}(x,s)+\sup_{\tau\in\S'(b)}\E^{x,s}\left[\1_{\{T_s<\tau\}}e^{-qT_s}(\bar{V}-\bar{f})(s,s)\right.\\
&&+\1_{\{\tau<T_s\wedge\zeta\}}e^{-q\tau}(g-\bar{f})(X_{\tau},s)\nonumber\\
&&\left.-\1_{\{\zeta=\tau<T_s\}}e^{-q\tau}(k+\bar{f})(X_{\tau},s)\right]\nonumber
\end{eqnarray}
Set $\tau=\tau(l)$. Then, from (\ref{eq:exit-time}), the first term
in (\ref{eq:V-x-s}) can be written by
\[
\E^{x,s}\left[\1_{\{T_s<\tau\}}e^{-qT_s}(\bar{V}-\bar{f})(s,s)\right]=\frac{W^{(q)}(l(s)+x-s)}{W^{(q)}(l(s))}(\bar{V}-\bar{f})(s,s).
\]
For the second term, we use Theorem 1 and 2 in Pistorius\cite{Pistorius_2007} again to obtain, for $x\in [s-l(s), s]$
\begin{eqnarray*}
&&\E^{x,s}\left[\1_{\{\tau<T_s\wedge\zeta\}}e^{-q\tau}(g-\bar{f})(X_{\tau},s)\right]\\
&=&\E^{x,s}\left[e^{-q\tau_{l(s)}}\1_{\{Y_{\tau_{l(s)}-}=l(s), S_{\tau_{l(s)}}=s\}}\right](g-\bar{f})(s-l(s),s)\\
&&+\int \!\!\! \int_A(g-\bar{f})(s-y+h,s)
\E^{x,s}\left[e^{-q\tau_{l(s)}}\1_{\{X_{\tau_{l(s)}}-X_{\tau_{l(s)}-}\in \diff h, S_{\tau_{l(s)}}=s, Y_{\tau_{l(s)}-}\in \diff y\}}\right]\\
&=&\frac{\sigma^2}{2}\left(W_+^{(q)'}(l(s)+x-s)-\frac{W_+^{(q)'}(l(s))}{W^{(q)}(l(s))}W^{(q)}(l(s)+x-s)\right)\\
&&\times(g-\bar{f})(s-l(s),s)+\int^{l(s)}_{0}\diff y\int^{y-l(s)}_{y-b}\Pi (\diff h)(g-\bar{f})(s-y+h,s)\\
&&\times\left(\frac{W^{(q)}(l(s)+x-s)}{W^{(q)}(l(s))}W^{(q)}(y)-W^{(q)}(y+x-s)\right).
\end{eqnarray*}
For the third term, we have, in the same way as above,
\begin{eqnarray*}
&&\E^{x,s}\left[\1_{\{\zeta=\tau<T_s\}}e^{-q\tau}(k+\bar{f})(X_{\tau},s)\right]\\
&=&\int \!\!\! \int_B(k+\bar{f})(s-y+h,s)
\E^{x,s}\left[e^{-q\tau_{l(s)}}\1_{\{X_{\tau_{l(s)}}-X_{\tau_{l(s)}-}\in \diff h, S_{\tau_{l(s)}}=s, Y_{\tau_{l(s)}-}\in \diff y\}}\right]\\
&=&\int^{l(s)}_{0}\diff y\int^{y-b}_{-\infty}\Pi (\diff
h)(k+\bar{f})(s-y+h,s)\\
&&\times\left(\frac{W^{(q)}(l(s)+x-s)}{W^{(q)}(l(s))}W^{(q)}(y)-W^{(q)}(y+x-s)\right)
\end{eqnarray*}
for $x\in [s-l(s), s]$.

Now by combining all these terms, we can
write $\bar{V}(x, s)$ for $x\in [s-l^*(s), s]$,
\begin{subequations}\label{eq:final-form}
\begin{align}
\bar{V}(x, s)&= \bar{f}(x,s)+\frac{W^{(q)}(l^*(s)+x-s)}{W^{(q)}(l^*(s))}(\bar{V}-\bar{f})(s,s)\\
&+\frac{\sigma^2}{2}(g-\bar{f})(s-l^*(s),s)\nonumber\\
&\times\left(W_+^{(q)'}(l^*(s)+x-s)-\frac{W_+^{(q)'}(l^*(s))}{W^{(q)}(l^*(s))}W^{(q)}(l^*(s)+x-s)\right)\nonumber\\
&+\int^{l^*(s)}_{0}\diff y\int^{y-l^*(s)}_{y-b}\Pi (\diff h)(g-\bar{f})(s-y+h,s)\nonumber\\
&\times\left(\frac{W^{(q)}(l^*(s)+x-s)}{W^{(q)}(l^*(s))}W^{(q)}(y)-W^{(q)}(y+x-s)\right)\nonumber\\
&-\int^{l^*(s)}_{0}\diff y\int^{y-b}_{-\infty}\Pi (\diff h)(k+\bar{f})(s-y+h,s)\nonumber\\
&\left(\frac{W^{(q)}(l^*(s)+x-s)}{W^{(q)}(l^*(s))}W^{(q)}(y)-W^{(q)}(y+x-s)\right),\nonumber
\end{align}
where $l^*(s)$ is the maximizer of the map $z \mapsto
\frac{F_s(z)W^{(q)}(z)}{W_+^{(q)'}(z)}$ on $[0,b]$. Note that $\bar{V}-\bar{f}=V$ and $V(s, s)$ is given by \eqref{eq:V-explicit} (or \eqref{prop:1}).
For the stopping regions, it becomes from \eqref{eq:V-x-s}
\begin{equation}
  \bar{V}(x, s)=g(x, s), \quad x\in [s-b, s-l^*(s)]
\end{equation}
\begin{equation}
  \bar{V}(x, s)=-k(x, s), \quad x\in (-\infty, s-b).
\end{equation}

\end{subequations}
\section{Bank's Optimization under Capital Requirements}\label{sec:bank-example} 
In this section, we solve an example. Imagine that a bank's total
asset value is represented by $e^X$. We set that the \emph{leverage
ratio}, defined as (Debt)/(Total Asset), cannot exceed $e^{-b}$. For
example, if the bank has the initial asset of $e^x=100$ with
$e^{-b}=0.8$, it has total asset of $100$ financed by debt $80$ and
equity $20$.  We can think of this ratio as the maximum leverage
ratio that is allowed by the banking regulations.  We assume that
the bank increases its asset base as long as $X=S$ where $S$ is the
running maximum of $X$ and that the bank's leverage ratio is
maintained at $0.8$.  Hence if the asset value appreciates to $120$,
then this would provide the bank with more lending opportunity since
the equity value is now $40$. With this new equity level, the bank
increases its leverage up to $0.8$, that is, total asset increasing
to $200$ financed by debt $160$ and equity $40$. Note that
$e^S=e^X=200$ and the debt level is $e^{-b}e^S=e^{S-b}=160$.    Now
if the bank's asset deteriorates due to defaults in the lending
portfolio, we would have $S-X>0$. In other words, there appears an
excursion from the level of $e^S=200$.  Since the asset level has
been pegged at $e^S=200$, the bank's equity would be wiped out when
$e^{S-b}=e^X$.  That is, when $e^X=160$ and the process is absorbed.

This model well describes a real situation where even a large bank
can fail easily as we have experienced several times, the recent and
magnified shock being the global financial crisis in 2007-2008.  After
becoming a large bank, it may still have an incentive to increase
assets, seeking for profits. The danger of becoming insolvent is
still $X=S-b$ if the bank continues to use leverage ratio of
$e^{-b}$. The absorbing boundary is always coming after.

Moreover, note that this model can incorporate the regulatory
requirements that the bank, when experiencing asset deterioration,
need to sell the assets in order to reduce the leverage.\footnote{We
are thankful to Nan Chen for pointing out this requirement.} For
example, assume that when the bank loses one dollar of asset, the
bank loses its equity by $\alpha$ and reduces its debt by
$1-\alpha$, where $\alpha\in(0,1]$. Then, at the time the equity is
wiped out, we have
\[
e^X\le e^S\left(1-\frac{1-e^{-b}}{\alpha}\right),
\]
that is, the process is absorbed when the excursion $S-X$ reaches  $-\log\left(1-\frac{1-e^{-b}}{\alpha}\right)$.
Note that an application of this setting to bank regulations is studied in Egami and Oryu \cite{Egami-Oryu2013a}.

We consider the problem (\ref{problem}) with the process \[X_t=x+\mu t+\sigma
B_t+\sum_{i=1}^{N(t)}\xi_i,\] where $B$ is a standard Brownian
motion, $N$ is a Poisson process with intensity $a$, and $\xi_i$
$(i=1,2,\ldots)$ are independent identically distributed random
variables whose distributions are exponential with parameter $\rho$
under $\p$. Note that if we put $a=0$, then $X$ is a Brownian motion with drift.
The reward functions are set by $f(x,s)=e^{x/2}$,
$g(x,s)=e^x$, and $k(x,s)=0$, so that the problem is
$V(x, s)=\sup_{\tau\in\S}\E^{x, s}\left[\int_0^{\tau\wedge\zeta} e^{-qt}(e^{X_t/2}) \diff t+ \1_{\{\tau<\zeta\}}e^{-q\tau}e^{X_\tau}\right]$.

In this case, the Laplace exponent
$\psi$ of $X$ is given by
\[
\psi(\lambda)=\mu\lambda + \frac{\sigma^2\lambda^2}{2}-\frac{a\lambda}{\rho+\lambda}.
\]
$\psi(\lambda)=q$ has three solutions $\Phi(q)$, $\alpha$, and $\beta$ (in decreasing order) and $q$-scale function $W^{(q)}$ of $X$ is represented with these values;
\[
W^{(q)}(x)=\frac{e^{\Phi(q)x}}{\psi'(\Phi(q))}+\frac{e^{\alpha x}}{\psi'(\alpha)}+\frac{e^{\beta x}}{\psi'(\beta)}.
\]
In the Brownian motion case, $\psi(\lambda)=q$ has only two solutions $\Phi(q)$ and $\alpha$, and the third term above vanishes.

For the existence of  $\bar{f}$, we need some restriction on the parameters. $\bar{f}$ can be calculated by
\begin{eqnarray*}
\bar{f}(x,s)&=&\E^{x,s}\left[\int^{\infty}_0 e^{-qt}f(X_t,S_t)\diff t\right]\\
&=&\int^{\infty}_0 e^{-qt}\E^{x,s}\left[f(X_t,S_t)\right]\diff t\\
&=&e^{\frac{x}{2}}\int^{\infty}_0 e^{(\psi(\frac{1}{2})-q)t}\diff t.
\end{eqnarray*}
Hence the condition we need is $\psi(\frac{1}{2})-q=\frac{\mu}{2}+\frac{\sigma^2}{8}-\frac{a}{2\rho+1}-q<0$, and then
\[
\bar{f}(x,s)=-\frac{e^{\frac{x}{2}}}{\frac{\mu}{2}+\frac{\sigma^2}{8}-\frac{a}{2\rho+1}-q}.
\]

\subsection{Brownian motion} \label{sec:Bm} First, we consider a Brownian motion with drift by using parameters $\mu=0.05$, $\sigma=0.1$, $a=0$, $q=0.1$,
and $b=1$. Note that the value of $\rho$ has no effects on the problem when $a=0$.

Panels (i), (ii), and (iii) in Figure \ref{valuefn_bm} are the
graphs of $\frac{F_s(z)W^{(q)}(z)}{W_+^{(q)'}(z)}$ with $s=5,5.2141$,
and $5.3$, respectively. As shown in Proposition
\ref{prop:2}, the maximum value in each graph corresponds to
$V(s,s)$ for various values of $s$, and $z=l^*(s)$ are the
maximizers.
\begin{figure}[h]
\begin{center}
\begin{minipage}{0.31\textwidth}
\centering{\includegraphics[scale=0.32]{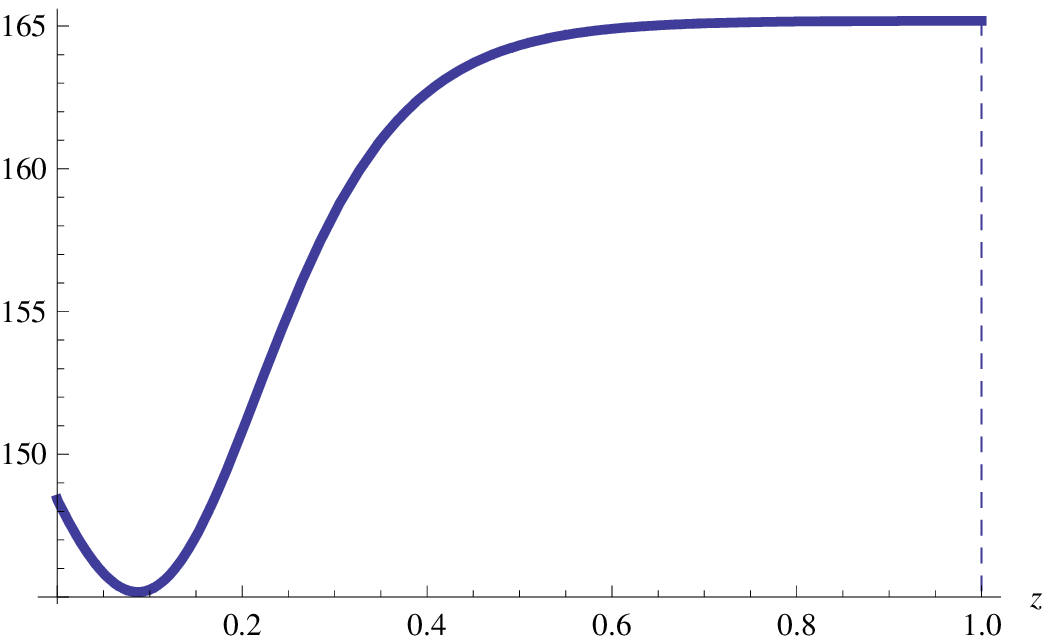}}\\
(i) s=5
\end{minipage}
\begin{minipage}{0.31\textwidth}
\centering{\includegraphics[scale=0.32]{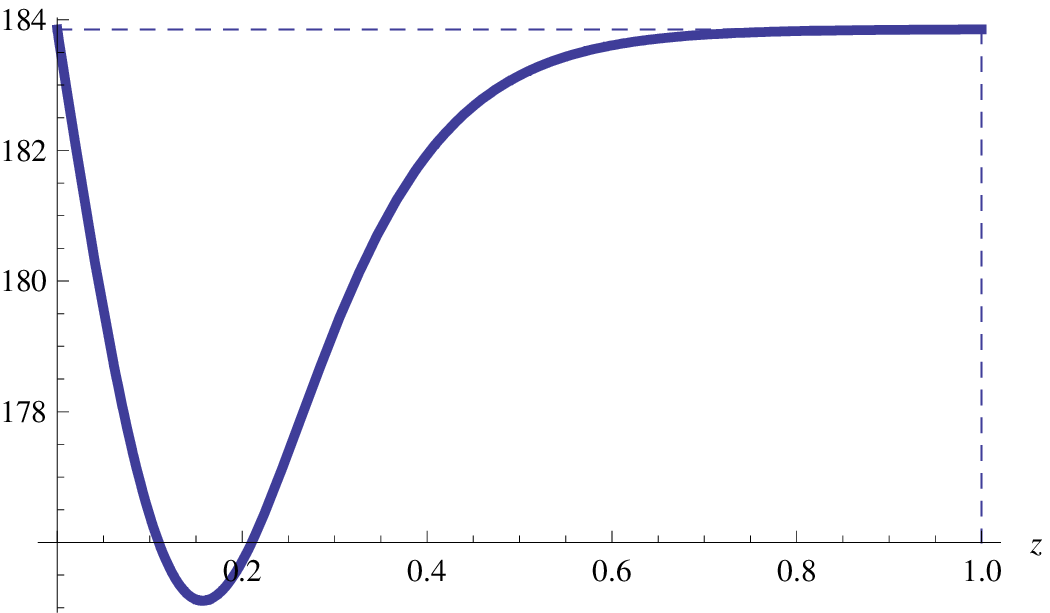}}\\
(ii) s=5.2141
\end{minipage}
\begin{minipage}{0.31\textwidth}
\centering{\includegraphics[scale=0.32]{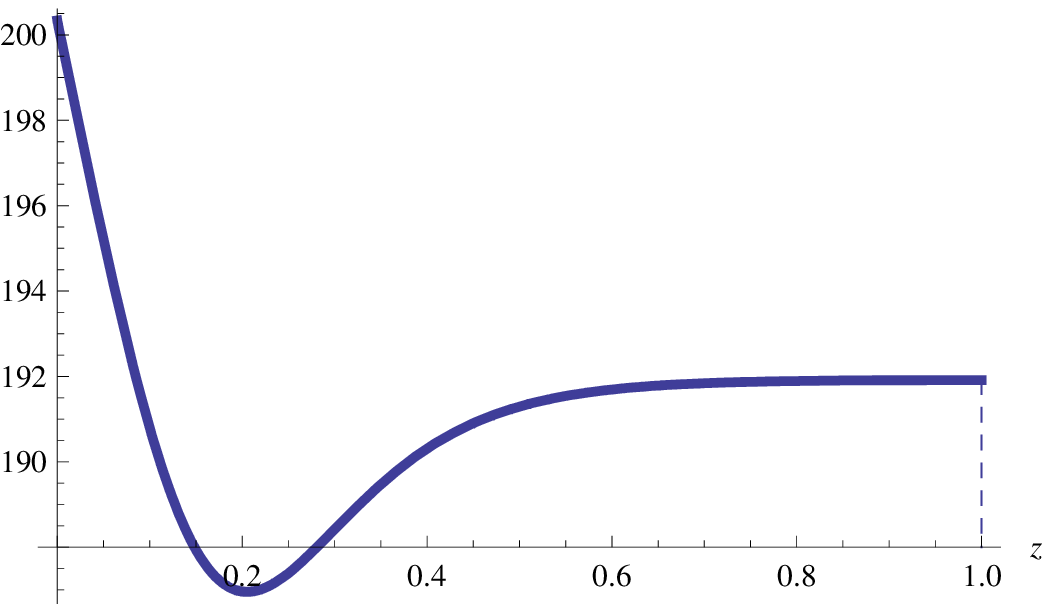}}\\
(iii) s=5.3
\end{minipage}
\caption{Graphs of $\frac{F_s(z)W^{(q)}(z)}{W_+^{(q)'}(z)}$ for various values of $s$.}
\label{valuefn_bm}
\end{center}
\end{figure}
In the case (i) $s=5$, we have  the boundary solution $l(5)=1$. This
means that, in the excursion which occurs at level $S=5$, it is
optimal to stop when $X$ goes below $5-l(5)=4$. Since we set $b=1$
here, if $X$ creeps over the level $4$, we can obtain the terminal
reward. The case (ii) $s=5.2141$ is a special point in some sense. Unlike
the case $s<5.2141$, there are two solutions $l(5.2141)=0$ and
$1$. If we choose the former strategy $l(5.2141)=0$, this
means, when $X$ reaches level $5.2141$ for the first time, we stop
it immediately and gain the terminal reward. If we choose the latter
one $l(5.2141)=1$, that means we should stop when $X$ goes below $5.2141-l(5.2141)=4.2141$.
In the case (iii), there is the boundary solution $l(5.3)=0$.
That is, when $X$ reaches level $5.3$ for the first time, we should
exit immediately and gain the terminal reward.

These arguments are summarized in Figure \ref{l(m)_bm} that
illustrates an optimal strategy $l$ over the whole region of $s\in
\R_+$. The dashed line is drawn at $s=5.2141$, which indicates the
turning points of strategies. For $s<5.2141$, $l$ constantly takes
the value of $1$. In this region, it is optimal to stop when the height
of the excursion is $1$. For $s>5.2141$, $l$ constantly takes on $0$.
In this region, our strategy reduces to the classical threshold strategy
by observing the path of process $X$. That is, stop at the first passage time of level
$5.2141$ by the process $X$.

\begin{figure}[h]
\begin{center}
\begin{minipage}{0.75\textwidth}
\centering{\includegraphics[scale=0.75]{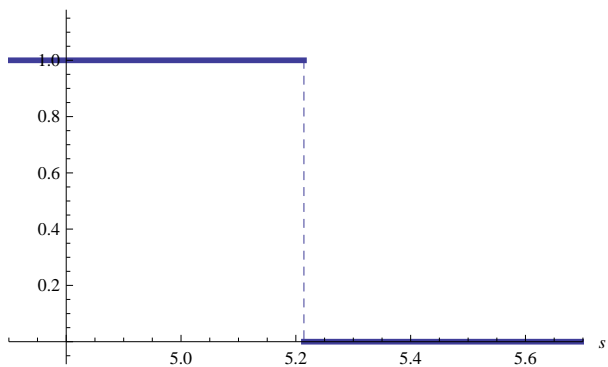}}\\
\end{minipage}
\caption{Graph of an optimal strategy $l^*$.} \label{l(m)_bm}
\end{center}
\end{figure}
\begin{figure}
\begin{center}
\begin{minipage}{\textwidth}
\centering{\includegraphics[scale=0.75]{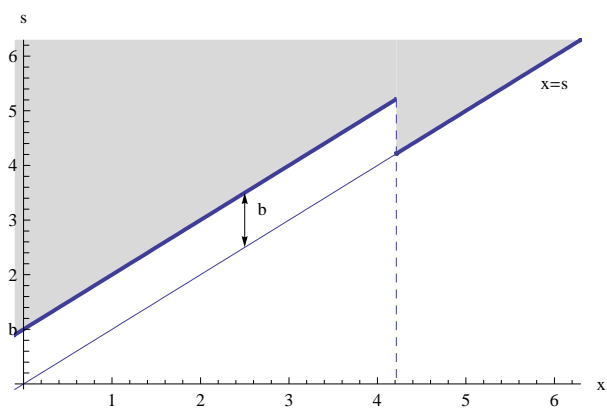}}\\
\end{minipage}
\caption{Graph of an optimal strategy $l^*$ on $(x,s)$-plane.  The shaded area is optimal stopping region.} \label{l(m)_(x,s)}
\end{center}
\end{figure}

\subsection{Brownian motion with exponential jumps}
Next, to see the effects of jumps on the threshold strategy, we set $\mu=0.25$, $\sigma=0.1$, $a=2$, $\rho=10$, $q=0.1$, and $b=1$.
Panels (i), (ii), (iii), and (iv) in Figure \ref{valuefn} are the
graphs of $\frac{F_s(z)W^{(q)}(z)}{W_+^{(q)'}(z)}$ with $s=4, 5,
5.1963$ and $5.3$, respectively. Note that the number $d$ in
(\ref{eq:W0}) is 0.05 which is equal to the drift of the Brownian motion in Section \ref{sec:Bm}.

\begin{figure}[h]
\begin{center}
\begin{minipage}{0.45\textwidth}
\centering{\includegraphics[scale=0.45]{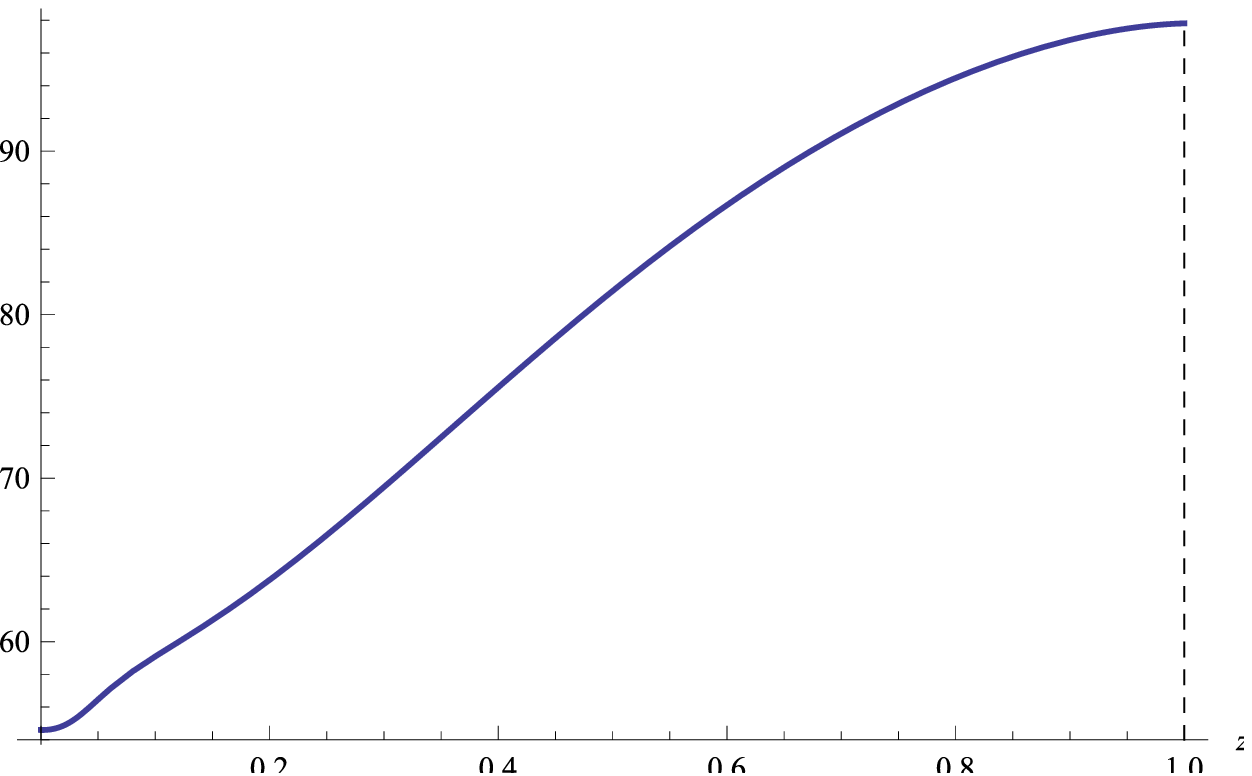}}\\
(i) s=4
\end{minipage}
\begin{minipage}{0.45\textwidth}
\centering{\includegraphics[scale=0.45]{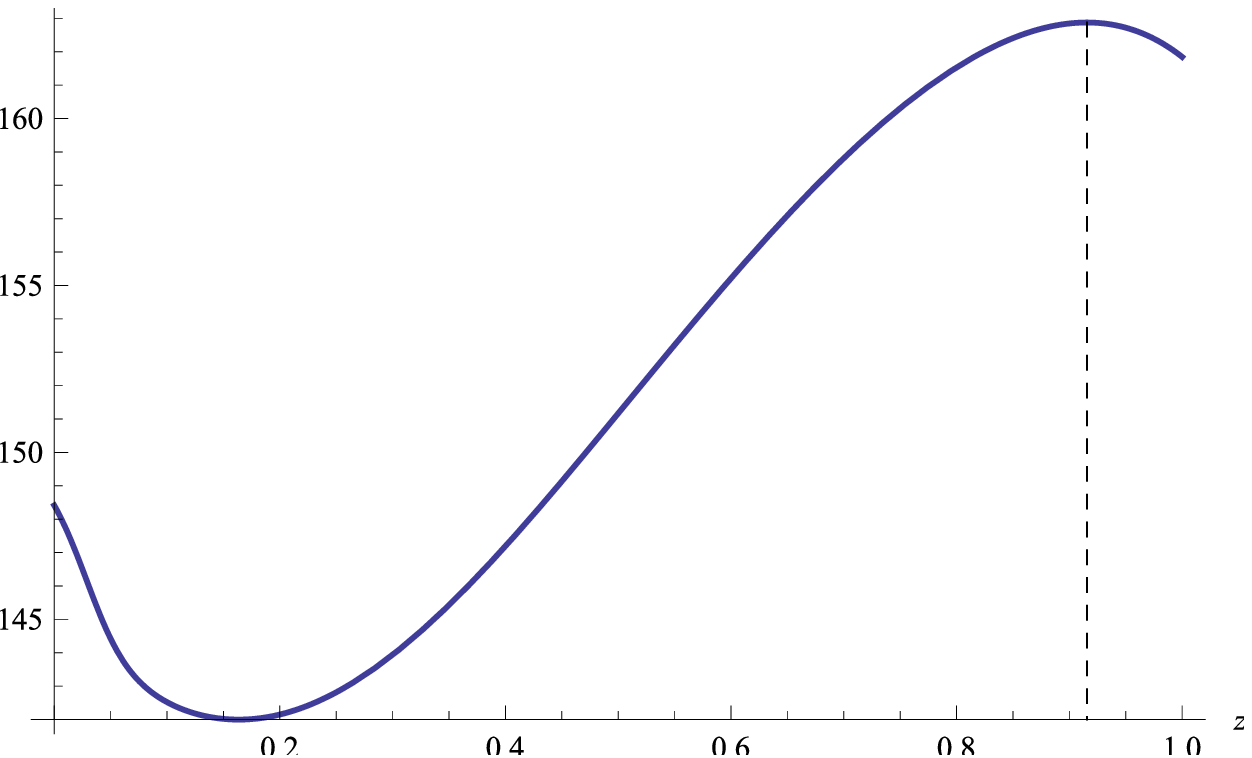}}\\
(ii) s=5
\end{minipage}
\begin{minipage}{0.45\textwidth}
\centering{\includegraphics[scale=0.45]{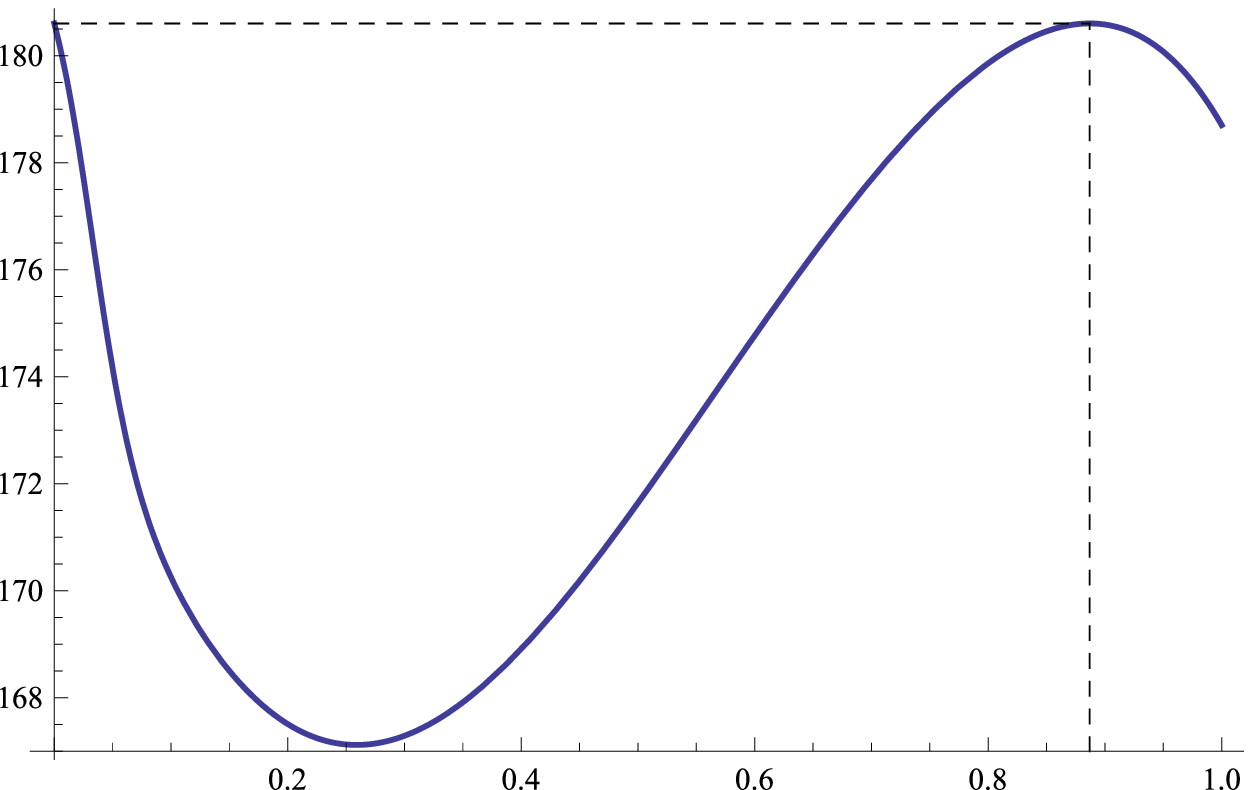}}\\
(iii) s=5.1963
\end{minipage}
\begin{minipage}{0.45\textwidth}
\centering{\includegraphics[scale=0.45]{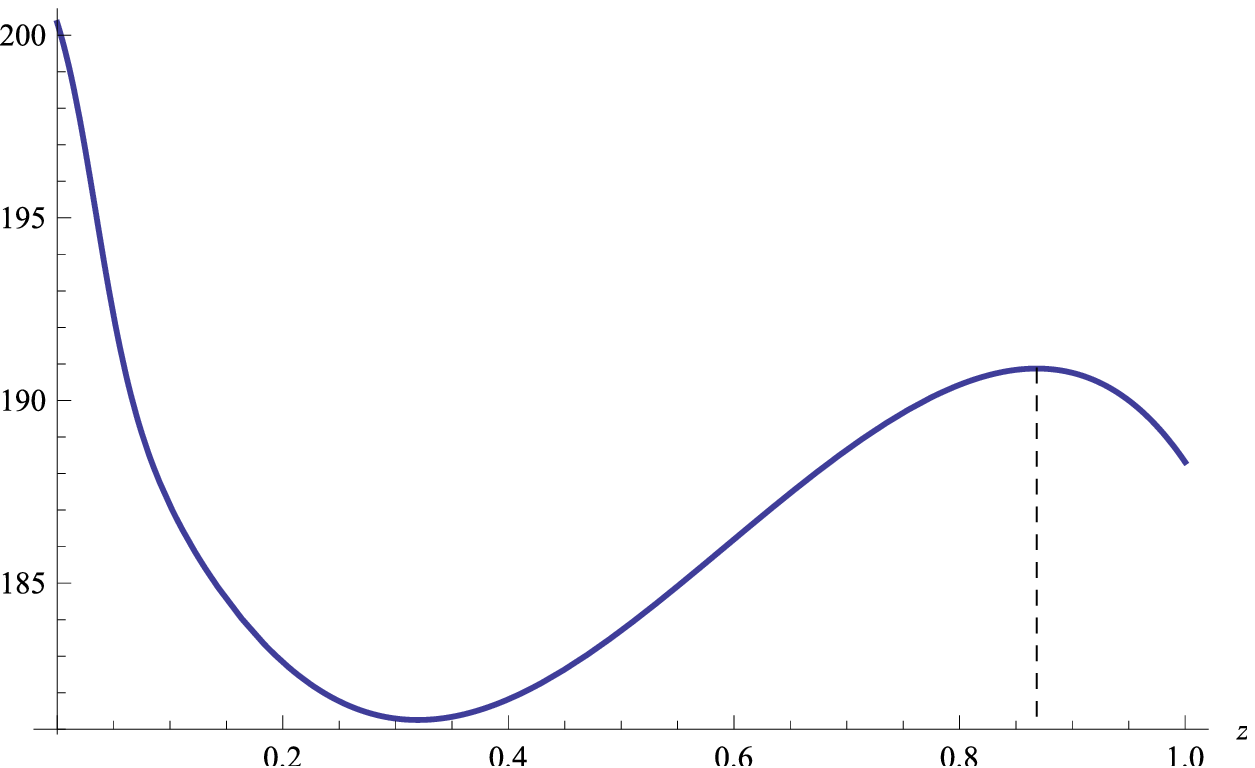}}\\
(iv) s=5.3
\end{minipage}
\caption{Graphs of $\frac{F_s(z)W^{(q)}(z)}{W_+^{(q)'}(z)}$ for various values of $s$.}
\label{valuefn}
\end{center}
\end{figure}

In the case (i) $s=4$, we have the boundary solution $l(4)=1$. This
means that, in the excursion which occurs at level $S=4$, it is
optimal to stop when $X$ goes below $4-l(4)=3$. Since we set $b=1$
here, if $X$ creeps over the level $3$, we can obtain the terminal
reward. Instead, if $X$ jumps over the level $3$, we
should pay the penalty (but we set this as $0$ here) and cannot gain
the terminal reward. In the case (ii) $s=5$, there is the internal
solution $l(5)=0.915551<1=b$. Therefore, in the excursion which
occurs at level $S=5$, it is optimal to stop immediately that $X$
goes below $5-l(5)=4.08445$. Since $l(5)<b$, if $X$ creeps over the
level $5-l(5)$ or jumps onto in the area between $5-l(5)$ and $5-b=4$, we
can obtain the terminal reward. But if $X$ jumps across
the level $4$, we cannot obtain the terminal reward.
\begin{figure}[h]
\begin{center}
\begin{minipage}{0.75\textwidth}
\centering{\includegraphics[scale=0.75]{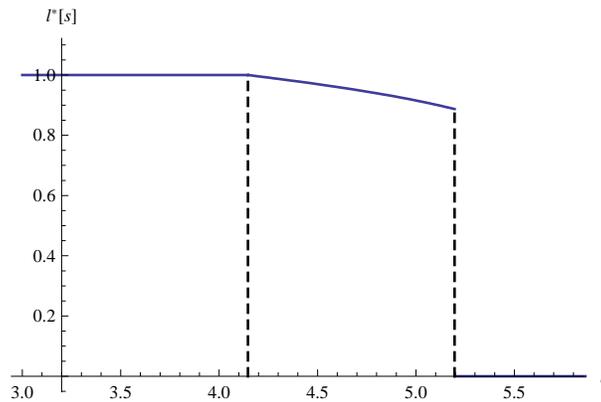}}\\
\end{minipage}
\caption{Graph of an optimal strategy $l^*$.} \label{l(m)}
\end{center}
\end{figure}
\begin{figure}
\begin{center}
\begin{minipage}{\textwidth}
\centering{\includegraphics[scale=0.75]{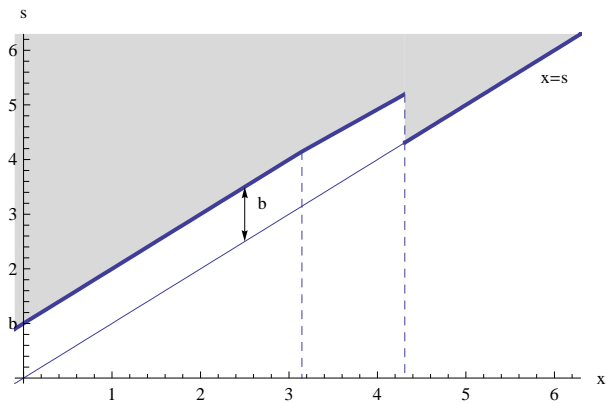}}\\
\end{minipage}
\caption{Graph of an optimal strategy $l^*$ on $(x,s)$-plane.  The shaded area is optimal stopping region.} \label{l(m)_(x,s)}
\end{center}
\end{figure}
The case (iii) $s=5.1963$ is a turning point of strategy like the case (ii)
in Figure \ref{valuefn_bm}. There are two solutions $l(5.1963)=0$ and
$0.886898$. If we choose the former strategy $l(5.1963)=0$, this
means, when $X$ reaches level $5.1963$ for the first time, we stop
it immediately and gain the terminal reward. If we choose the latter
one $l(5.1963)=0.886898$, that means we should behave like the case
(ii). In the case (iv), there is the boundary solution $l(5.3)=0$.
That is, when $X$ reaches level $5.3$ for the first time, we should
exit immediately and gain the terminal reward.

These arguments are summarized in Figure \ref{l(m)} that
illustrates an optimal strategy $l$ over the whole region of $s\in
\R_+$. Two dashed lines are drawn at $s=4.1464$ and $s=5.1963$,
which indicate the turning points of the strategies. For $s<4.1464$,
$l$ constantly takes the value of $1$. In this region, it is optimal
to stop when the height of the excursion is $1$.  When $s$ lies
between $4.1464$ and $5.1963$, $l$ has the form of concave curve
started at $1$. In this region, one should stop once the height of
excursion is greater than or equal to $l(s)<1$. Note that this region does \emph{not} show up in the Brownian motion case.
Finally,  for
$s>5.1963$, $l$ constantly takes on $0$. In this region, our
strategy reduces to the classical threshold strategy by observing
the path of process $X$.  That is, stop at the first passage time of level
$5.1963$ by the process $X$.

Before concluding this section, let us make some comparisons of the two cases. In the jump case, we have two levels $s^1:=4.1464$ and
$s^2:=5.1963$ that mark the change points of strategy, while we have only one point $s^0:=5.2141$ in the no jump case.  At the level of $s\in (s^1, s^2)$, the bank should stop the process during the excursion, a strategy that does not exist in the no jump case.  It is due to the existence of jumps which could bring the process suddenly to the ruin.  We have $s_2<s_0$, which again shows that jumps make the bank more cautious; with $(a, 1/\rho)=(2, 0.1)$ it should not expect to reach a level as high as $s_0$ even with the large $\mu=0.25$.




\begin{thebibliography}{10}


\bibitem{alili-kyp}
L.~Alili and A.~E. Kyprianou.
\newblock Some remarks on first passage of {L}\'{e}vy processes, the American
  put and smooth pasting.
\newblock {\em Ann. Appl. Probab.}, 15:2062--2080, 2004.


\bibitem{alvarez2}
L.~H.~R. Alvarez.
\newblock On the properties of r-excessive mappings for a class of diffusions.
\newblock {\em Ann. Appl. Probab.}, 13 (4):1517--1533, 2003.


\bibitem{avram-et-al-2004}
F.~Avram, A.~E. Kyprianou, and M.~R. Pistorius.
\newblock Exit problems for spectrally negative {L}\'{e}vy processes and
  applications to ({C}anadized) {R}ussion options.
\newblock {\em Ann. Appl. Probab.}, 14:215--235, 2004.


\bibitem{Avram_et_al_2007}
F.~Avram, Z.~Palmowski, and M.~R. Pistorius.
\newblock On the optimal dividend problem for a spectrally negative {L}\'evy
  process.
\newblock {\em Ann. Appl. Probab.}, 17(1):156--180, 2007.


\bibitem{Baurdoux2008}
E.~Baurdoux and A.~E. Kyprianou.
\newblock The {M}c{K}ean stochastic game driven by a spectrally negative
  {L}\'evy process.
\newblock {\em Electron. J. Probab.}, 13:no. 8, 173--197, 2008.


\bibitem{Baurdoux2009}
E.~Baurdoux and A.~E. Kyprianou.
\newblock The {S}hepp-{S}hiryaev stochastic game driven by a spectrally
  negative {L}\'evy process.
\newblock {\em Theory Probab. Appl.}, 53, 2009.


\bibitem{Bertoin_1996}
J.~Bertoin.
\newblock {\em {L}\'evy processes}, volume 121 of {\em Cambridge Tracts in
  Mathematics}.
\newblock Cambridge University Press, Cambridge, 1996.


\bibitem{Bertoin_1997}
J.~Bertoin.
\newblock Exponential decay and ergodicity of completely asymmetric {L}\'evy
  processes in a finite interval.
\newblock {\em Ann. Appl. Probab.}, 7(1):156--169, 1997.


\bibitem{Chan_2009}
T.~Chan, A.E. Kyprianou, and M.~Savov.
\newblock Smoothness of scale functions for spectrally negative {L}\'evy
  processes.
\newblock {\em Probab. Theory Relat. Fields}, to appear.

\bibitem{cont-tankov}
R.~Cont and P.~Tankov.
\newblock {\em Financial modelling with jump processes}.
\newblock Chapman Hall/CRC, Boca Raton, 2004.

\bibitem{DK2003}
S.~Dayanik and I.~Karatzas.
\newblock On the optimal stopping problem for one-dimensional diffusions.
\newblock {\em Stochastic Process. Appl.}, 107 (2):173--212, 2003.


\bibitem{Doney_2005}
R.~A. Doney.
\newblock Some excursion calculations for spectrally one-sided {L}\'evy
  processes.
\newblock In {\em S\'eminaire de {P}robabilit\'es {XXXVIII}}, volume 1857 of
  {\em Lecture Notes in Math.}, pages 5--15. Springer, Berlin, 2005.


\bibitem{dynkin}
E.~Dynkin.
\newblock {\em Markov processes, Volume II}.
\newblock Springer Verlag, Berlin, 1965.

\bibitem{Egami-Oryu2013a}
M.~Egami and T.~Oryu.
\newblock An excursion theoretic approach to regulator's bank reorganization problem.
\newblock{\em Operations Research}, to appear in 2015.


\bibitem{Egami_Yamazaki_2010_2}
M.~Egami and K.~Yamazaki.
\newblock Phase-type fitting of scale functions for spectrally negative
  {L}\'evy processes.
\newblock {\em Journal of Computational and Applied Mathematics}, 264: 1-22, 2014.


\bibitem{Egami-Yamazaki-2010-1}
M.~Egami and K.~Yamazaki.
\newblock Precautional measures for credit risk management in jump models.
\newblock {\em Stochastics}, 85:111-143, 2013.


\bibitem{Guo-Zervos_2010}
X.~Guo and M.~Zervos.
\newblock $\pi$ options.
\newblock {Stock. Proc. Appl.}, 120(7):1033-1059, 2010.

\bibitem{Kyprianou_2006}
A.~E. Kyprianou.
\newblock {\em Introductory lectures on fluctuations of {L}\'evy processes with
  applications}.
\newblock Universitext. Springer-Verlag, Berlin, 2006.


\bibitem{Kyprianou_Palmowski_2007}
A.~E. Kyprianou and Z.~Palmowski.
\newblock Distributional study of de {F}inetti's dividend problem for a general
  {L}\'evy insurance risk process.
\newblock {\em J. Appl. Probab.}, 44(2):428--448, 2007.


\bibitem{Kyprianou_Surya_2007}
A.~E. Kyprianou and B.~A. Surya.
\newblock Principles of smooth and continuous fit in the determination of
  endogenous bankruptcy levels.
\newblock {\em Finance Stoch.}, 11(1):131--152, 2007.


\bibitem{Loeffen_2008}
R.~L. Loeffen.
\newblock On optimality of the barrier strategy in de {F}inetti's dividend
  problem for spectrally negative {L}\'evy processes.
\newblock {\em Ann. Appl. Probab.}, 18(5):1669--1680, 2008.


\bibitem{sulem}
B.~{\O}ksendal and A.~Sulem.
\newblock {\em Applied Stochastic Control of Jump Diffusions}.
\newblock Springer, New York, 2005.

\bibitem{ott_2013}
C.~Ott
\newblock Optimal stopping problems for the maximum process with upper and lower caps
\newblock {\em Ann. Appl. Probab.}, 23(6): 2327--2356, 2013.

\bibitem{Pham-book}
H. Pham.
\newblock {\em Continuous-time Stochastic Control and Optimization with Financial Applications}, volume 61 of {\em Stochastic Modeling and Applied Probability}.
\newblock Springer, Berlin Heidelberg, 2009.

\bibitem{Pistorius_2004}
M.~R. Pistorius.
\newblock On exit and ergodicity of the spectrally one-sided {L}\'evy process
  reflected at its infimum.
\newblock {\em J. Theoretical Probab.}, 17(1):183--220, 2004.


\bibitem{Pistorius_2007}M.~R. Pistorius.
\newblock An excursion-theoretical approach to some boundary crossing problems and the Skorokhod embedding for reflected {L}\'evy processes.
\newblock In {\em S\'eminaire de {P}robabilit\'es {XL}}, volume 1899 of
  {\em Lecture Notes in Math.}, pages 278--307. Springer, Berlin, 2007.

\bibitem{shepp-shiryaev-1993}
L.~Shepp and A.~N. Shiraev
\newblock The {R}ussian option: {R}educed regret.
\newblock{\em Ann. Appl. Probab.}, 3(3): 631--640, 1993.

\bibitem{Surya_2008}
B.~A. Surya.
\newblock Evaluating scale functions of spectrally negative {L}\'evy processes.
\newblock {\em J. Appl. Probab.}, 45(1):135--149, 2008.

\end{thebibliography}
\end{document}